\theoremstyle{plain}
\newtheorem{theorem}{Theorem}[section]
\newtheorem{corollary}[theorem]{Corollary}
\newtheorem{lemma}[theorem]{Lemma}
\newtheorem{proposition}[theorem]{Proposition}
\theoremstyle{definition}
\newtheorem{definition}[theorem]{Definition}
\newtheorem{remark}[theorem]{Remark}
\theoremstyle{remark}
\newtheorem{example}[theorem]{Example}
\numberwithin{theorem}{section}
\numberwithin{equation}{section}
\newcommand{\Z}{\mathbb{Z}}
\newcommand{\R}{\mathbb{R}}
\newcommand{\dist}{\mathrm{dist}}
\newcommand{\diam}{\mathrm{diam}}
\newcommand{\inrad}{\mathrm{inrad}}
\newcommand{\cl}{\overline}
\newcommand{\loc}{\mathrm{loc}}
\DeclareMathOperator*{\divergence}{div}
\newcommand{\laplacian}{\Delta}
\DeclareMathOperator*{\osc}{osc}
\DeclareMathOperator*{\esssup}{ess\,sup}
\DeclareMathOperator*{\essinf}{ess\,inf}
\DeclareMathOperator*{\spt}{supp}
\newcommand{\capacity}{\mathrm{cap}}
\newcommand{\wkto}{\rightharpoonup}
\newcommand{\trinorm}[1]
{{
    \left\vert\kern-0.20ex\left\vert\kern-0.20ex\left\vert
    #1 
    \right\vert\kern-0.20ex\right\vert\kern-0.20ex\right\vert
}}
\newcommand{\M}{\mathcal{M}}
\begin{document}


\title[Global H\"{o}lder solvability]{Global H\"{o}lder solvability of linear and quasilinear Poisson equations}
\author{Takanobu Hara \, \orcidlink{0000-0001-6510-0422}}
\email{takanobu.hara.math@gmail.com}
\address{
Mathematical Institute, Tohoku University,
6-3, Aramaki Aza-Aoba, Aoba-ku, Sendai, Miyagi 980-8578, Japan}
\date{\today}
\subjclass[2020]{35J92; 35J25; 31C45} 
\keywords{potential theory, $p$-Laplacian, quasilinear elliptic equation, boundary value problem, boundary regularity}



\begin{abstract}
We establish an existence result for globally continuous weak solutions to elliptic equations of the $p$-Poisson type.
This result significantly improves Theorem 8.30 in Gilbarg-Trudinger (1983) and offers a novel contribution for the classical Poisson equation on Lipschitz domains,
ensuring global H\"{o}lder continuity of solutions under a minimal assumption on the right-hand side.
Applications of this result to embedding theorems are also discussed.
\end{abstract}




\maketitle


\section{Introduction}\label{sec:introduction}

\subsection{Main results}

The purpose of this paper is to establish an existence result for globally continuous weak solutions to
elliptic equations of the $p$-Poisson type.
This result significantly improves \cite[Theorem 8.30]{MR1814364}
and is also new for the classical Poisson equation, ensuring the existence of
H\"{o}lder continuous solutions under a minimal assumption on the right-hand side. 

We now state our main result. 
Throughout the paper, we consider the elliptic equation
\begin{equation}\label{eqn:DE}
\begin{cases}
- \divergence \mathcal{A}(x, \nabla u) = \nu & \text{in} \ \Omega, \\
u = 0 & \text{on} \ \partial \Omega,
\end{cases}
\end{equation}
where 
$\Omega$ is a bounded open set in $\R^{n}$ ($n \ge 2$),
$\divergence \mathcal{A}(x, \nabla \cdot)$ is a $p$-Laplacian ($1 < p \le n$) type elliptic operator,
and $\nu$ is a locally finite signed Radon measure on $\Omega$.
The precise assumptions on
$\mathcal{A} \colon \Omega \times \R^{n} \to \R^{n}$ are as follows:
For each $z \in \R^{n}$, $\mathcal{A}(\cdot, z)$ is measurable, for each $x \in \Omega$, $\mathcal{A}(x, \cdot)$ is continuous,
and there exists $1 \le L < \infty$ such that
\begin{align}
\mathcal{A}(x, z) \cdot z \ge |z|^{p}, \label{eqn:coercive}
\\ 
|\mathcal{A}(x, z)| \le L  |z|^{p - 1},  \label{eqn:growth}
\\
\left( \mathcal{A}(x, z_{1}) - \mathcal{A}(x, z_{2}) \right) \cdot (z_{1} - z_{2}) > 0, \label{eqn:monotonicity}
\end{align}
for all $x \in \Omega$, $z, z_{1}, z_{2} \in \R^{n}$, $z_{1} \not = z_{2}$ and $t \in \R$.
For an open set $U$ and a compact subset $K \subset U$,
we define the variational $p$-capacity $\capacity_{p}(K, U)$ by
\begin{equation}\label{eqn:variational_capacity}
\capacity_{p}(K, U)
:=
\inf
\left\{
\int_{\R^{n}} |\nabla u|^{p} \, dx \colon u \in C_{c}^{\infty}(U), \ u \ge 1 \, \text{on} \, K
\right\}.
\end{equation}
Our main result is as follows.
\begin{theorem}\label{thm:main}
Assume that $\Omega$ satisfies the capacity density condition
\begin{equation}\label{eqn:CDC_intro}
\exists \gamma > 0 \quad \text{s.t.} \quad
\frac{ \capacity_{p}( \cl{B(\xi, R)} \setminus \Omega, B(\xi, 2R)) }{ \capacity_{p}( \cl{B(\xi, R)}, B(\xi, 2R)) } \ge \gamma
\quad \forall R > 0, \ \forall \xi \in \partial \Omega.
\end{equation}
Let $\nu$ be a locally finite signed Radon measure satisfying
\begin{equation}\label{eqn:RZ}
|\nu|(B(x, r))
\le
M^{p - 1}
r^{\lambda}
\quad
\forall x \in \Omega, \, 0 < \forall r < \delta(x) / 2,
\end{equation}
where $M$ is a constant, $n - p <  \lambda \le n$, and $\delta(x) := \dist(x, \partial \Omega)$.
Then, there exists a unique weak solution
$u \in W^{1, p}_{\loc}(\Omega) \cap C(\cl{\Omega})$ to \eqref{eqn:DE}.
Moreover, $u \in C^{\beta_{1}}( \cl{\Omega} )$ and satisfies
\begin{equation}\label{eqn:hoelder_esti}
\sup_{ \substack{x, y \in \Omega \\ x \neq y} } \frac{|u(x) - u(y)|}{|x - y|^{\beta_{1}}}
\le
C \, \inrad(\Omega)^{\beta - \beta_{1}} M,
\end{equation}
where $C$ and $0 < \beta_{1} \le \min\{ \beta, 1 \}$ are positive constants depending only on $n$, $p$, $L$, $\gamma$ and $\lambda$,
$\inrad(\Omega) := \sup_{x \in \Omega} \delta(x)$, and $\beta = (\lambda - n + p) / (p - 1)$.
\end{theorem}

We give simple examples where Theorem \ref{thm:main} applies.
When $\mathcal{A}(x, z) = |z|^{p - 2} z$, the operator $\divergence \mathcal{A}(x, \nabla \cdot)$ is called the $p$-Laplacian and denoted by $\laplacian_{p}$. 
If $\mathcal{A}(x, z) = z$, this reduces to the classical Laplacian $\laplacian$.
The condition \eqref{eqn:CDC_intro} is often used in the study of Hardy's inequality (e.g., \cite{MR856511, MR946438, MR1010807, MR4306765})
and boundary regularity estimates (e.g., \cite[Chapter 6]{MR2305115}).
A straightforward way to verify \eqref{eqn:CDC_intro} is to use exterior cones.
In particular, the Poisson equation on a Lipschitz domain provides an application of Theorem \ref{thm:main}.

Theorem \ref{thm:main} yields a compact embedding theorem as a corollary.
This can be interpreted as a global version of \cite[Sect. 11.9.1, Theorem 3]{MR2777530} under \eqref{eqn:CDC_intro}.
Details are provided in Section \ref{sec:embedding}.

\subsection{Background and contribution of the paper}

The generalized Poisson equation \eqref{eqn:DE} remains an interesting topic despite substantial research into various aspects,
including existence of solutions, their regularity, and their application to embedding theorems.
It is well-known that  there exists a finite energy solution $u \in W_{0}^{1, p}(\Omega)$ if and only if $\nu \in (W_{0}^{1, p}(\Omega))^{*}$
(see, e.g.,  \cite{MR0259693}).
Regularity results for them can be found in famous monographs \cite{MR244627, MR1814364, MR1461542, MR2305115}.
Theory of generalized solutions to finite signed measure data was developed in \cite{MR1025884,MR1205885,MR1760541}.
Definitions of solutions to infinite measure data are given in \cite{MR1205885, MR1655522, MR1955596},
but their existence problem seems to be still open (\cite[Problem 2]{MR1955596}).
See \cite{MR4309811, MR4493271} for partial answers.

The two existence results above are not suitable for considering $W^{1, p}_{\loc}(\Omega) \cap C(\cl{\Omega})$-solutions.
This class is a natural generalization of the class of classical solutions $C^{2}(\Omega) \cap C(\cl{\Omega})$.
The existence of solutions in this class is also known to lead to a certain type of embedding theorem
(e.g., \cite{MR1618334, MR856511, MR1734322}), which are of independent interest.
Unfortunately, there is no inclusion between this class and $W_{0}^{1, p}(\Omega)$.
According to \cite{MR1734322, MR1747901, MR3881877}, let us examine the generalized energy integral 
\[
\int_{\Omega} u^{\gamma} \, d \nu,
\]
where $0 < \gamma < \infty$ and $u$ is a solution to \eqref{eqn:DE}.
The cases $\gamma= 1$ and $\gamma= 0$ correspond to the energy of $u$ and of the mass of $\nu$, respectively.
Our case $\gamma = \infty$ is the most interesting from the point of view of boundary behavior, and methods for other $\gamma$ are clearly inadequate.

The use of supersolutions and the comparison principle are a standard method to obtain globally continuous solutions.
However, verifying the existence of supersolutions in our setting is itself a challenge.
By performing direct calculations with respect to $\delta$, various boundary regularity results can be derived.
This method yields good results when $\mathcal{A}$ and $\Omega$ have a specific structure
(see, e.g., \cite{MR603385, MR700735, MR2193183, MR2286361, MR2286038}).
In contrast, it is well established that the existence of merely continuous solutions requires only
very mild assumptions on $\mathcal{A}$ and $\Omega$.
A distinct gap persists between these results, and it remains unfilled.

In Theorem \ref{thm:barrier}, we solve this problem using combination of a construction method of barriers in \cite{MR856511}
and a boundary regularity estimate Lemma \ref{lem:boundary_hoelder_esti}.
This systematic construction method of supersolutions constitutes the essential contribution of this paper.
This work extends the results from \cite{hara2022strong}.

\subsection{Compare with known results}

In contrast to \cite{MR856511} and \cite{hara2022strong}, we establish the H\"{o}lder norm estimate \eqref{eqn:hoelder_esti} in this paper.
Such a statement seems new even for the Poisson equation on smooth domains since global properties of  \eqref{eqn:RZ} has not yet been discussed.
By \cite[Theorem 3.2]{MR998128},
if there exists a weak supersolution $u \in W^{1, p}_{\loc}(\Omega) \cap C^{\beta}(\cl{\Omega})$ to $- \divergence \mathcal{A}(x, \nabla u) = \nu \ge 0$ in $\Omega$,
then $\nu$ satisfies \eqref{eqn:RZ} with 
\[
M = C \sup_{ \substack{x, y \in \Omega \\ x \neq y} } \frac{|u(x) - u(y)|}{|x - y|^{\beta}}
\quad \text{and} \quad
\lambda = n - p + \beta(p - 1),
\]
where $C$ is a constant independent of $u$ and $\nu$. 
Therefore, our result provides a criterion for the existence of H\"{o}lder continuous solutions (Corollary \ref{cor:criterion}).

The condition \eqref{eqn:RZ} is commonly used in interior H\"{o}lder estimates
(e.g., \cite{MR120446, MR0271383, MR644024, MR0964029, MR998128, MR1283326, MR1290667, MR1354887, MR1887015}),
while its counterpart in boundary estimates is believed -- without any justification -- to be 
\begin{equation}\label{eqn:lem2.7}
|\nu|(B(x, r) \cap \Omega)
\le
M^{p - 1}
r^{\lambda}
\quad
\forall x \in \Omega, \, 0 < \forall r < \infty.
\end{equation}
(See Lemma \ref{lem:boundary_hoelder_esti} for details.)
It is clear that \eqref{eqn:lem2.7} is stronger than \eqref{eqn:RZ}, and
ironically, the most notable difference arises when $\nu = \delta^{-t} m$, where $m$ is the Lebesgue measure.
The condition \eqref{eqn:RZ} is fulfilled if $t < p$ (see Corollary \ref{cor:existence_for_functions}), whereas \eqref{eqn:lem2.7} requires $t < p / n$.
That is, our result properly accounts for the essentially one-dimensional structure of boundary value problems that has previously been neglected.
Lemma \ref{lem:boundary_hoelder_esti} also has the drawback that it assumes the finiteness of the energy of $u$.
The new discussion in this paper can be interpreted as removing these assumptions by applying Lemma \ref{lem:boundary_hoelder_esti} infinitely many times.
Example \ref{exa:sequence_of_finite_energy_sols} below shows that neither \eqref{eqn:lem2.7} nor finiteness of energy is necessary for H\"{o}lder continuity of solutions.

\begin{example}
Let $n \ge 2$, $p = 2$ and $0 < \beta < 1 / 2$.
Let $u(x) = (1 - |x|^{2})^{\beta}$.
Then,
\[
\begin{cases}
- \laplacian u = 2 \beta (1 - |x|^{2})^{\beta - 2} \{ n (1 - |x|^{2}) + 2 (1 - \beta) |x|^{2} \} & \text{in} \ B(0, 1),
\\
u = 0 & \text{on} \ \partial B(0, 1).
\end{cases}
\]
In this case, $u \in C^{\beta}(\cl{B(0, 1)}) \setminus H^{1}_{0}(B(0, 1))$.
Hence, we cannot apply known regularity estimate.
\end{example}

\begin{example}\label{exa:sequence_of_finite_energy_sols}
Let $n \ge 3$, $p = 2$ and $0 < \beta < 1 / 2$.
For $0 < R < 1$, define $u_{R}(x) = u_{R}(|x|)$ on $B(0, 1)$ as follows:
\[
u_{R}(x)
:=
\begin{cases}
\displaystyle
\frac{(1 - R)^{\beta}}{ R^{2 - n} - 1 } \left( |x|^{2 - n} - 1\right) & \text{if} \  R < |x| < 1,
\\
(1 - R)^{\alpha} & \text{otherwise.}
\end{cases}
\]
Direct computation shows that
\[
\| u_{R} \|_{C^{\beta}(\cl{ B(0, 1) })} = 1
\]
and
\[
\| \nabla u_{R} \|_{L^{2}(B(0, 1))}^{2}
=
C(n) \frac{ (1 - R)^{2 \beta} }{ R^{2 - n} - 1 }.
\]
The functions $\{ u_{R} \}$ satisfy a uniform H\"{o}lder estimate and converge uniformly to zero as $R \to 1$. 
However, their Dirichlet energies tend to $+ \infty$.
\end{example}

As will be shown later in Section \ref{sec:examples}, Theorem \ref{thm:main} also applies to these examples.
In fact, it yields a uniform $C^{\beta_{1}}(\cl{\Omega})$ estimate for some $0 < \beta_{1} \le \beta$ and correctly explains the compactness of $\{ u_{R} \}$.
More precisely, we have the explicit identity $- \laplacian u_{R} = f(R) \sigma_{R}$, 
where $\sigma_{R}$ denotes the surface measure on $\partial B(0, R)$ and $f(R)$ is controlled by $(1 - R)^{\beta - 1}$.
Furthermore, by Corollary \ref{cor:surface}, $\nu_{R}$ satisfies \eqref{eqn:RZ} for $\lambda = n - 2 + \beta$, where the constant $M$ is independent of $R$.


\subsection*{Organization of the paper}
Section \ref{sec:preliminaries} provides some auxiliary results.
In Section \ref{sec:barrier}, we establish a constructive method of supersolutions (Theorem \ref{thm:barrier}).
In Section \ref{sec:existence}, we prove our main theorem (Theorem \ref{thm:main}) using results in Section \ref{sec:barrier}.
In Section \ref{sec:embedding}, we prove a compact embedding theorem (Theorem \ref{thm:compact_embedding}) using Theorem \ref{thm:main}.
Examples of measures satisfying our assumptions are discussed in Section \ref{sec:examples}.

\subsection*{Notation}
Throughout the paper, $\Omega$ is an open set in $\R^{n}$, and $\Gamma$ is a closed subset of $\partial \Omega$.
The distance from $\Gamma$ is denoted by $\delta_{\Gamma}(\cdot)$.
If $\Gamma = \partial \Omega$, we write $\delta_{\partial \Omega}(\cdot)$ as $\delta(\cdot)$ simply.
We define the \textit{inradius} of $\Omega$ by $\inrad(\Omega) := \sup_{x \in \Omega} \delta(x)$.
The set of all nonnegative Radon measures on $\Omega$ is denoted by $\M^{+}(\Omega)$.
\begin{itemize}
\item
$\mathbf{1}_{E}(x) :=$ the indicator function of a set $E$.
\item
$C_{c}(\Omega) :=$
the set of all continuous functions with compact support in $\Omega$.
\item
$C_{c}^{\infty}(\Omega) := C_{c}(\Omega) \cap C^{\infty}(\Omega)$.
\item
$L^{p}(\Omega; \mu) :=$ the $L^{p}$ space with respect to $\mu \in \M^{+}(\Omega)$.
\end{itemize}
When the Lebesgue measure must be indicate clearly, we use the letter $m$.
We write $L^{p}(\Omega; m)$ as $L^{p}(\Omega)$ simply.
For a function $u$ on $B$, we use the notation $\fint_{B} u \,dx := m(B)^{-1} \int_{B} u \, dx$.
For a ball $B = B(x, R) = \{ y \colon \dist(x, y) < R \}$ and $t > 0$,
we denote $B(x, t R)$ by $t B$.
The letter $C$ denotes various constants.

\section{Preliminaries}\label{sec:preliminaries}

We first clarify our terminology for measures.
We denote by $\mathcal{M}(\Omega)$
the set of all real-valued continuous linear functionals $\nu$ on $C_{c}(\Omega)$.
We have the identification
$\mathcal{M}(\Omega) = \left\{ \nu_{+} - \nu_{-} \colon \nu_{\pm} \in \mathcal{M}^{+}(\Omega) \right\}$
(see, \cite[Theorem 2, III.12]{MR2018901}).
If $\nu \in \mathcal{M}(\Omega)$, the total variation $|\nu| = \nu_{+} + \nu_{-}$ of $\nu$ belongs to $\mathcal{M}^{+}(\Omega)$.
For $\nu \in \mathcal{M}(\Omega)$ and $f \in L^{1}_{\loc}(\Omega; |\nu|)$,
we denote by $f \nu$ the linear functional 
\[
C_{c}(\Omega) \ni \varphi \mapsto \int_{\Omega} \varphi f \, d \nu.
\]

Next, we recall properties of Sobolev functions and variational capacities. 
We denote by $W^{1, p}(\Omega)$ the set of all weakly differentiable functions on $\Omega$
such that
\[
\| u \|_{W^{1, p}(\Omega)}
:=
\left(
\int_{\Omega} |u|^{p} \, dx + \int_{\Omega} |\nabla u|^{p} \, dx 
\right)^{1 / p}
\]
is finite.
The space $W_{0}^{1, p}(\Omega)$ is the closure of $C_{c}^{\infty}(\Omega)$ with respect to $\| \nabla \cdot \|_{L^{p}(\Omega)}$.
Let $U \subset \R^{n}$ be open.
For a compact subset $K \subset U$, the variational $p$-capacity $\capacity_{p}(K, U)$ of the condenser $(K, U)$ is defined by \eqref{eqn:variational_capacity}.
For $E \subset \Omega$, we define
\[
\capacity_{p}(E, \Omega)
:=
\inf_{\substack{ E \subset U \subset \Omega \\ U \colon \text{open} }}
\sup_{K \subset U \colon \text{compact}}
\capacity_{p}(K, \Omega).
\]
By \cite[Theorem 2.2]{MR2305115}, if $\{ E_{k} \}_{k = 1}^{\infty}$ is an increasing sequence of subsets of $\Omega$, then we have
\begin{equation}\label{eqn:cap_monotonicity}
\capacity_{p}\left( \bigcup_{k = 1}^{\infty} E_{k}, \Omega \right) = \lim_{k \to \infty} \capacity_{p}(E_{k}, \Omega).
\end{equation}
As in \cite[Theorem 2.5]{MR2305115}, Choquet's theorem
\begin{equation}\label{eqn:choquet}
\capacity_{p}\left( E, \Omega \right) = \sup_{K \subset E: \text{compact}} \capacity_{p}(K, \Omega)
\end{equation}
holds, where $E$ is any Borel subset of $\Omega$.
We say that a property holds $p$-\textit{quasieverywhere} (q.e.)
if it holds except on a set of $p$-capacity zero.
An extended real valued function $u$ on $\Omega$ is called as \textit{quasicontinuous}
if for every $\epsilon > 0$ there exists an open set $G$ such that
$C_{p}(G) < \epsilon$ and $u|_{\Omega \setminus G}$ is continuous.
Every $u \in W^{1, p}_{\loc}(\Omega)$ has a quasicontinuous representative $\tilde{u}$
such that $u = \tilde{u}$ a.e.

Next, we introduce Morrey spaces of finite signed measures.
Let by $\mathcal{M}_{b}(\Omega)$ denote the space of all regular finite signed measures.
Following the notation in \cite{MR3467116}, we define
\[
\| \mu \|_{\mathcal{M}^{ \lambda }(\Omega)}
:=
\sup_{ \substack{ x \in \Omega \\ 0 < r < \infty } } r^{- \lambda} |\mu|(\Omega \cap B(x, r))
\]
for $0 \le \lambda \le n$.
The \textit{Morrey space} $\mathcal{M}^{\lambda}(\Omega)$ consists of all
$\mu \in \mathcal{M}_{b}(\Omega)$ such that $\| \mu \|_{\mathcal{M}^{\lambda}(\Omega)} < \infty$.
We also define $\mathcal{M}^{\lambda}_{\loc}(\Omega)$ by
\[
\mathcal{M}^{\lambda}_{\loc}(\Omega)
:=
\left\{ \nu \in \mathcal{M}(\Omega) \colon \nu|_{\Omega'} \in \mathcal{M}^{\lambda}(\Omega') \ \text{for any} \ \Omega' \Subset \Omega \right\},
\]
where $\nu|_{\Omega'}$ is the restriction of $\nu$ to $\Omega'$.


\begin{example}\label{example:finite1}
Let $m$ be the Lebesgue measure, and let $f \in L^{q}(\Omega) = L^{q}(\Omega, m)$.
Then, $f m \in \mathcal{M}^{n - n / q}(\Omega)$ if $q < \infty$ and $f m \in \mathcal{M}^{n}(\Omega)$ if $q = \infty$.
In particular, the Lebesgue measure itself belongs to $\mathcal{M}^{n}(\Omega)$.
\end{example}


\begin{example}\label{example:finite3}
If $\sigma = \mathcal{H}^{n - 1} \lfloor_{E}$ is the surface measure of a smooth surface $E$,
then, $\sigma \in \mathcal{M}^{n - 1}(\Omega)$.
\end{example}


\begin{lemma}\label{lem:HW}
If $n - p < \lambda \le n$, then $\mathcal{M}^{\lambda}(\Omega) \subset (W_{0}^{1, p}(\Omega))^{*}$.
\end{lemma}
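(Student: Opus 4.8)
The plan is to reduce the claimed inclusion to a single a priori bound: that $\big|\int_\Omega \varphi\,d\mu\big| \le C\,\|\nabla\varphi\|_{L^p(\Omega)}$ for every $\varphi \in C_c^\infty(\Omega)$, with $C$ independent of $\varphi$. Since $C_c^\infty(\Omega)$ is dense in $W^{1,p}_0(\Omega)$ for the norm $\|\nabla\cdot\|_{L^p(\Omega)}$, this bound produces the desired element of $(W^{1,p}_0(\Omega))^*$. Two preliminary reductions simplify matters. First, $\big|\int_\Omega\varphi\,d\mu\big| \le \int_\Omega|\varphi|\,d|\mu|$ and $\||\mu|\|_{\mathsf{M}^q(\Omega)} = \|\mu\|_{\mathsf{M}^q(\Omega)}$, so we may assume $\mu \ge 0$. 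Second, because $\Omega$ is bounded one has the continuous embedding $W^{1,p}_0(\Omega) \hookrightarrow W^{1,\tilde p}_0(\Omega)$ for $1 < \tilde p \le p$ (H\"older's inequality on $\Omega$); hence, when $p = n$, it suffices to prove the inclusion with $p$ replaced by any $\tilde p \in (n/q,n)$ — a nonempty range since $q > 1$ — and we are thereby reduced to $1 < p < n$.

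For $1 < p < n$ I would set $\lambda := n/q'$ and record the key numerology: the hypothesis $q > n/p$ is exactly $\lambda > n - p$, which forces $n - p < \lambda \le n$ and makes $\sigma := \lambda p/(n-p)$ a finite exponent with $\sigma > p$. The two ingredients I would then invoke are: (i) the classical pointwise estimate $|\varphi(x)| \le c_n\, I_1(|\nabla\varphi|)(x)$ for $\varphi \in C_c^\infty(\Omega)$ extended by zero to $\R^n$, where $I_1 g(x) = \int_{\R^n}|x-y|^{1-n}g(y)\,dy$; and (ii) the trace inequality of D.~R.~Adams: from the Morrey bound $\mu(\Omega\cap B(x,r)) \le \|\mu\|_{\mathsf{M}^q(\Omega)}\,r^{\lambda}$ together with $1 < p \le \sigma < \infty$ and $\lambda = \sigma(n/p-1)$, one obtains $\|I_1 g\|_{L^\sigma(\Omega;\mu)} \le C\,\|\mu\|_{\mathsf{M}^q(\Omega)}^{1/\sigma}\|g\|_{L^p(\R^n)}$ for all $g \in L^p(\R^n)$, with $C = C(n,p,q)$. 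Applying (ii) to $g = |\nabla\varphi|$ and inserting (i) gives the Morrey--Sobolev trace estimate $\|\varphi\|_{L^\sigma(\Omega;\mu)} \le C\,\|\mu\|_{\mathsf{M}^q(\Omega)}^{1/\sigma}\|\nabla\varphi\|_{L^p(\Omega)}$. Finally, since $\mathsf{M}^q(\Omega) \subset \mathcal{M}_b(\Omega)$ one has $\mu(\Omega) < \infty$ (indeed $\mu(\Omega) \le C\,\|\mu\|_{\mathsf{M}^q(\Omega)}\diam(\Omega)^{\lambda}$, taking $r \approx \diam(\Omega)$ in the Morrey bound), so H\"older's inequality through the inclusion $L^\sigma(\Omega;\mu) \hookrightarrow L^1(\Omega;\mu)$ yields $\int_\Omega|\varphi|\,d\mu \le \|\varphi\|_{L^\sigma(\Omega;\mu)}\,\mu(\Omega)^{1-1/\sigma} \le C'\|\nabla\varphi\|_{L^p(\Omega)}$, which is the bound we wanted.

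The genuinely non-elementary input is the Adams trace inequality, and so the main point to get right is not a computation but verifying that our hypotheses match its hypotheses: precisely the equivalence $q > n/p \Leftrightarrow \lambda > n - p$, which simultaneously keeps $\sigma$ finite and guarantees the admissibility condition $\sigma \ge p$. Everything else — the reduction from $p = n$ to $\tilde p < n$, the pointwise estimate, and the closing H\"older step — is routine. If one wishes, one may add afterwards that a set of $p$-capacity zero has vanishing $\mathcal{H}^{\lambda}$-measure (since $\lambda > n - p$) and is therefore $\mu$-null, so the functional constructed above is represented on $W^{1,p}_0(\Omega)$ by $\varphi \mapsto \int_\Omega \tilde\varphi\,d\mu$ acting on quasicontinuous representatives; this identifies the inclusion $\mathsf{M}^q(\Omega) \subset (W^{1,p}_0(\Omega))^*$ as the natural one, though it is not needed for the statement.
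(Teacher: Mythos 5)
Your argument is correct, but it runs along a genuinely different track from the paper's. The paper verifies the hypothesis of the Hedberg--Wolff theorem: it bounds the truncated Wolff potential $\mathbf{W}_{1,p}^{2\diam(\Omega)}\mu_{\pm}$ pointwise using the Morrey decay $\mu_{\pm}(B(x,r))\le \|\mu\|_{\mathsf{M}^{q}(\Omega)}r^{n/q'}$ (the exponent $p-n/q>0$ makes the integral converge), integrates once more against $\mu_{\pm}$, and then quotes the characterization of $(W_{0}^{1,p}(\Omega))^{*}\cap\mathcal{M}^{+}$ via finiteness of the Wolff energy. You instead prove the stronger trace estimate $\|\varphi\|_{L^{\sigma}(\Omega;\mu)}\le C\|\mu\|_{\mathsf{M}^{q}(\Omega)}^{1/\sigma}\|\nabla\varphi\|_{L^{p}(\Omega)}$ with $\sigma=\lambda p/(n-p)>p$, via $|\varphi|\le c_{n}I_{1}(|\nabla\varphi|)$ and Adams' trace inequality, and then close with H\"older using $\mu(\Omega)<\infty$; your key numerical check $q>n/p\Leftrightarrow\lambda>n-p$ is exactly right, and the reduction of $p=n$ to $\tilde p\in(n/q,n)$ via the boundedness of $\Omega$ is sound. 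The trade-offs: the paper's route is shorter given the reference, treats $1<p\le n$ uniformly with no case split, and needs only the measure-theoretic dual characterization; your route costs the Adams theorem and the $p=n$ reduction (plus the small observation that the ball condition extends from balls centered in $\Omega$ to all balls in $\R^{n}$, by doubling the radius), but it yields as a by-product the embedding $W_{0}^{1,p}(\Omega)\hookrightarrow L^{p}(\Omega;\mu)$ for global Morrey measures --- a quantitative statement in the spirit of Theorem \ref{thm:embeding}, which the paper only obtains later, and only for the weaker floated class $\mathsf{M}_{\Gamma}^{q}$, by an entirely different (Picone-inequality) argument. Your closing remark about capacity-null sets being $\mu$-null is correct and, as you say, not needed for the statement.
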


\begin{proof}
This follows from the Hedberg-Wolff theorem directly.
Let $\mu \in \mathcal{M}^{\lambda, +}(\Omega)$.
By assumption, we have
\[
\begin{split}
\mathbf{W}_{1, p}^{2 \diam(\Omega)} \mu_{+} (x)
& :=
\int_{0}^{2\diam(\Omega)}
\left( \frac{\mu_{+}(B(x, r))}{r^{n - p}} \right)^{1 / (p - 1)} \, \frac{d r}{r}
\\
& \le
C \| \mu \|_{ \mathcal{M}^{ \lambda }(\Omega) }^{1 / (p - 1)}
\diam(\Omega)^{ (\lambda - n + p) / (p - 1) }.
\end{split}
\]
Using assumption on $\mu$ again, we get
\[
\int_{\Omega} \mathbf{W}_{1, p}^{2 \diam(\Omega)} \mu_{+} \, d \mu_{+} 
\le
C \| \mu \|_{ \mathcal{M}^{ \lambda }(\Omega) }^{p / (p - 1)} \diam(\Omega)^{\lambda + (\lambda - n + p) / (p - 1)}.
\]
It follows from \cite[Remark 21.19]{MR2305115} that $\mu_{+} \in (W_{0}^{1, p}(\Omega))^{*}$.
The same argument valid for $\mu_{-}$. 
\end{proof}

We denote by $\M^{+}_{0}(\Omega)$ the set of all $\nu \in \mathcal{M}^{+}(\Omega)$ which satisfy
\[
\capacity_{p}(E, \Omega) = 0 \ \implies \nu(E) = 0
\]
for any Borel set $E \subset \Omega$.
It is known that if $\nu \in (W_{0}^{1, p}(\Omega))^{*} \cap \mathcal{M}^{+}(\Omega)$, then $\nu \in \mathcal{M}_{0}^{+}(\Omega)$
(\cite[Proposition 21.8]{MR2305115}).
Therefore, for $n - p <  \lambda \le n$, we have $\mathcal{M}^{\lambda}_{\loc}(\Omega) \subset \mathcal{M}_{0}(\Omega)$, where
\[
\mathcal{M}_{0}(\Omega) := \left\{ \mu \in \mathcal{M}(\Omega) \colon |\mu| \in \mathcal{M}_{0}^{+}(\Omega) \right\}.
\]

Finally, we summarize the elliptic regularity estimates to be used later.
Elliptic partial differential equations with $\mathcal{M}^{\lambda}(\Omega)$ coefficients have been studied by many authors.
The following estimates are consequence of \cite[Corollary 1.95]{MR1461542}.
Their proof is same as \cite[Theorems 3.12 and 3.13]{MR1461542}, except for replacing functions by measures.

\begin{lemma}\label{lem:global_boundedness}
Suppose that $\nu \in \mathcal{M}^{\lambda}(\Omega)$ for some $n - p <  \lambda \le n$. 
Let $u \in W^{1, p}(\Omega)$ be a weak subsolution to
$- \divergence \mathcal{A}(x, \nabla u) = \nu$ in $\Omega$.
Then,
\[
\esssup_{\Omega} u
\le
\sup_{\partial \Omega} u
+
C_{1} \| \nu \|_{\mathcal{M}^{ \lambda }(\Omega)}^{1 / (p - 1)} \diam(\Omega)^{ (\lambda - n + p) / (p - 1) },
\]
where $\sup_{\partial \Omega} u := \inf \{ k \in \R^{n} \colon (u - k)_{+} \in W_{0}^{1, p}(\Omega) \}$,
$C_{1}$ is a constant depending only on $n$, $p$, $L$ and $q$.
\end{lemma}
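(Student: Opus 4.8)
The plan is to follow the classical De Giorgi--Stampacchia iteration, adapted to the Morrey setting as in \cite[Theorems 3.12 and 3.13]{MR1461542}, with the measure $\nu$ playing the role of the divergence-form right-hand side. Let $k \ge \sup_{\partial \Omega} u$ and for $h > k$ set $A(h) := \{ x \in \Omega \colon u(x) > h \}$. Testing the subsolution inequality with $\varphi = (u - h)_{+} \in W_{0}^{1, p}(\Omega)$ (admissible precisely because $k \ge \sup_{\partial\Omega}u$) and using the coercivity \eqref{eqn:coercive} gives
\[
\int_{A(h)} |\nabla u|^{p} \, dx \le \int_{\Omega} (u - h)_{+} \, d\nu_{+},
\]
after discarding the favorable $\nu_{-}$ contribution. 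The first step is therefore to bound the right-hand side: since $\nu_{+} \le |\nu| \in \mathsf{M}^{q}(\Omega)$, one estimates $\int_{\Omega}(u-h)_{+} \, d\nu_{+}$ by Hölder's inequality in the $\nu_{+}$-measure together with the Morrey bound $\nu_{+}(B(x,r)) \le \| \nu \|_{\mathsf{M}^{q}(\Omega)} r^{n/q'}$, obtaining a factor of $\| \nu \|_{\mathsf{M}^{q}(\Omega)}$ times a power of $\diam(\Omega)$ times $\bigl( \int_{A(h)} (u - h)^{p^{*}} \, dx \bigr)$-type quantity, where $p^{*}$ is the Sobolev exponent (or any subcritical exponent when $p = n$). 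Here one uses that $\mathsf{M}^{q}(\Omega)$ with $q > n/p$ embeds into the dual of $W_{0}^{1,p}$, which is exactly Lemma \ref{lem:HW} and underlies why $\beta > 0$.

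Next I would combine this with the Sobolev inequality applied to $(u-h)_{+}$ to close an iterative inequality of the form
\[
\| (u - h)_{+} \|_{L^{p^{*}}(\Omega)} \le C \| \nu \|_{\mathsf{M}^{q}(\Omega)}^{1/(p-1)} |A(h)|^{\alpha}
\]
for a suitable $\alpha > 1$ (the exponent $\alpha$ exceeds $1$ precisely when $q > n/p$, which is what makes the Stampacchia lemma applicable). Then I would invoke the standard Stampacchia lemma on the decreasing function $h \mapsto |A(h)|$: it yields $|A(h)| = 0$ for $h \ge k + d$, where the jump $d$ has the form $C \| \nu \|_{\mathsf{M}^{q}(\Omega)}^{1/(p-1)} |A(k)|^{\alpha - 1/p^{*}}$; bounding $|A(k)| \le |\Omega| \le C\diam(\Omega)^{n}$ and tracking the powers of $\diam(\Omega)$ gives exactly the exponent $\beta = \tfrac{1}{p-1}(p - n/q)$ in the statement. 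Taking the infimum over admissible $k$, i.e. $k \downarrow \sup_{\partial \Omega} u$, finishes the bound for $\esssup_{\Omega} u$.

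The main obstacle is the bookkeeping of the scaling exponents: one must verify that all powers of $\diam(\Omega)$ assemble into precisely $\beta$ and that the exponent $\alpha - 1/p^{*}$ appearing in the Stampacchia iteration is strictly positive, which is equivalent to $q > n/p$ and is where the hypothesis on $q$ is genuinely used. A secondary technical point is handling the endpoint case $p = n$ (where $p^{*} = \infty$), which requires replacing the Sobolev embedding by a subcritical one and re-deriving the iteration with an exponent $p^{*}$ chosen large but finite, still yielding $\alpha > 1$; this is routine but must be stated. Everything else — the choice of test function, the Hölder step against the Morrey norm, and the invocation of the Stampacchia lemma — is standard, so I would present those briefly and refer to \cite[Corollary 1.95]{MR1461542} and \cite[Theorems 3.12 and 3.13]{MR1461542} for the details, noting only that measures replace $L^{q}$ functions without any change in the argument since only the Morrey bound on $\nu_{+}(B(x,r))$ is ever used.
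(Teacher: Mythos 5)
Your proposal is correct and matches the paper's intent: the paper gives no written proof of this lemma, deferring entirely to \cite[Corollary 1.95; Theorems 3.12 and 3.13]{MR1461542} with ``functions replaced by measures,'' and the De Giorgi--Stampacchia iteration you sketch (energy estimate from testing with $(u-h)_{+}$, the Morrey/trace bound on $\int (u-h)_{+}\,d\nu_{+}$ via the embedding underlying Lemma \ref{lem:HW}, then the Stampacchia lemma) is precisely the argument of those cited theorems. The scaling bookkeeping you flag does indeed assemble into $\beta=\frac{1}{p-1}(p-n/q)$, consistent with the Wolff-potential computation in the proof of Lemma \ref{lem:HW}.
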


\begin{lemma}\label{lem:weak_harnack}
Suppose that $\nu \in \mathcal{M}^{\lambda}(2B)$ for some $n - p < \lambda \le n$.
Let $u \in W^{1, p}(2B)$ be a nonnegative weak supersolution to $- \divergence \mathcal{A}(x, \nabla u) = \nu$ in $2B$.
Then, for each $0 < s < n (p - 1) / (n - p)$,
there exists a constant $C$ depending only on $n$, $p$, $L$, $\lambda$ and $s$ such that
\begin{equation}\label{eqn:WHI}
\left( \fint_{B} u^{s} \, dx \right)^{1 / s}
\le
C \left( \essinf_{B} u + \| \nu \|_{\mathcal{M}^{ \lambda }(2B)}^{1 / (p - 1)} \diam(B)^{ (\lambda - n + p) / (p - 1) } \right).
\end{equation}
\end{lemma}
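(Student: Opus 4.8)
The plan is to prove Lemma \ref{lem:weak_harnack} by the standard Moser--Serrin--Trudinger iteration adapted to the measure-valued right-hand side, following the scheme of \cite[Theorems 3.12 and 3.13]{MR1461542} but keeping track of the Morrey norm of $\nu$ through the inhomogeneous terms. Write $k := \| \nu \|_{\mathsf{M}^{q}(2B)}^{1/(p-1)} \diam(B)^{\beta}$ and $v := u + k$, so that $v$ is a positive supersolution and it suffices to show $\left( \fint_{B} v^{s} \, dx \right)^{1/s} \le C\, \essinf_{B} v$. First I would record the two ingredients that make the argument work: the local Caccioppoli-type estimate for powers $v^{\gamma}$ of a supersolution, in which the measure term contributes, after testing with $\eta^{p} v^{\gamma}$ for suitable $\gamma$ and using \eqref{eqn:coercive}--\eqref{eqn:growth}, a term controlled by $\int_{2B} \eta^{p} v^{\gamma - 1} \, d|\nu|$; and the Morrey bound, which via a dyadic/Fefferman--Phillips type estimate (the same computation underlying Lemma \ref{lem:HW}) lets one absorb $\int v^{\gamma-1}\,d|\nu|$ into the energy plus a power of $k$, precisely because $v \ge k$ makes the zeroth-order contribution of $\nu$ comparable to the scale-invariant quantity built into the definition of $k$.

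The iteration then splits into the two familiar halves. For positive exponents, test the equation with $\eta^{p} v^{\beta'}$ for $\beta' > 0$ near the borderline, apply Sobolev's inequality to $v^{(\beta'+p-1)/p}\eta$, and obtain a reverse-Hölder chain relating $\|v\|_{L^{\chi \gamma}(B_{r})}$ to $\|v\|_{L^{\gamma}(B_{R})}$ with $\chi = n/(n-p)$ and constants that, thanks to the reduction to $v = u+k$, do not pick up any extra dependence on $\nu$; iterating over a shrinking sequence of radii between those of $B$ and $2B$ and summing the geometric series of exponents gives $\left( \fint_{B} v^{s}\,dx \right)^{1/s} \le C \left( \fint_{2B} v^{s_{0}}\,dx \right)^{1/s_{0}}$ for any fixed $0 < s < n(p-1)/(n-p)$ and some small $s_{0}>0$. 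For the missing link from a small positive power down to the essential infimum, test with $\eta^{p} v^{\beta'}$ for $\beta' < 0$ (negative powers), which yields the same kind of reverse-Hölder inequalities but now with exponents tending to $0^{-}$, and combine these with a logarithmic estimate: testing with $\eta^{p} v^{-(p-1)}$ shows $\log v$ has bounded mean oscillation on $2B$ with a bound again absorbed into the definition of $k$, and then the John--Nirenberg inequality bridges the negative and positive small powers of $v$. Chaining the three pieces produces $\left( \fint_{B} v^{s}\,dx \right)^{1/s} \le C\, \essinf_{B} v$, which is \eqref{eqn:WHI} after unraveling $v = u + k$.

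The main obstacle, and the only place the measure data really enters, is making the absorption of the terms $\int_{2B}\eta^{p} v^{\gamma-1}\,d|\nu|$ uniform through the infinitely many iteration steps: one must check that at each step the constant multiplying the Morrey contribution stays bounded by a fixed geometric factor (so the product over all steps converges) and that the extra additive term is always dominated by a constant times $k^{\gamma}$ at the relevant scale. This is handled exactly as in \cite[Corollary 1.95]{MR1461542}: the Morrey condition $|\nu|(B(x,\rho)) \le \| \nu \|_{\mathsf{M}^{q}} \rho^{n/q'} = \| \nu \|_{\mathsf{M}^{q}} \rho^{n-p+\beta(p-1)}$ gives, after splitting $2B$ into dyadic annuli around a point, a bound $\int_{2B}\eta^{p} v^{\gamma - 1}\,d|\nu| \le \varepsilon \int_{2B} |\nabla (v^{\gamma/p}\eta)|^{p}\,dx \cdot \text{const} + C_{\varepsilon}\, k^{p-1}\int_{2B} v^{\gamma-1}\eta^{p}\,\rho^{-p}\,dx$ or, more directly, is controlled by the energy term plus $\diam(B)^{-p}k^{p-1}\int v^{\gamma-1}$, and the latter is absorbed because $v^{\gamma-1} \le k^{-1}v^{\gamma}$ when $\gamma \ge 1$ (and symmetrically for the negative-power branch). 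I would carry this out once at the level of the Caccioppoli inequality and then merely remark that the rest of the iteration is verbatim the $p$-Laplacian weak Harnack proof, citing \cite{MR2305115} or \cite{MR1461542} for the details. A final technical point to state carefully is the requirement $q > n/p$ (equivalently $\beta > 0$), which is what guarantees the dyadic sum defining the potential converges and thus what makes $k$ finite and the absorption legitimate.
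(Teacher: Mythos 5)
Your overall route is the right one and is in fact the route the paper itself relies on: the paper gives no detailed argument for this lemma, but only observes that it follows as in \cite[Theorems 3.12 and 3.13]{MR1461542} (via \cite[Corollary 1.95]{MR1461542}) ``replacing functions by measures''. Your reduction $v = u + k$ with $k = \| \nu \|_{\mathsf{M}^{q}(2B)}^{1/(p-1)}\diam(B)^{\beta}$, the Caccioppoli estimates for powers of $v$, the absorption of the measure term through the Morrey condition (which is exactly where $q > n/p$, i.e.\ $\beta > 0$, enters), and the three-piece assembly (positive small powers, negative powers, $\log v \in \mathrm{BMO}$ plus John--Nirenberg) is precisely that standard Serrin/Trudinger scheme.

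There is, however, one concrete error in the positive-exponent half as you wrote it. For a \emph{supersolution} you cannot test with $\eta^{p}v^{\beta'}$ for $\beta' > 0$: plugging $\varphi = \eta^{p}v^{\beta'}$ into $\int \mathcal{A}(x,\nabla v)\cdot\nabla\varphi\,dx \ge \int \varphi\,d\nu$ puts the term $\beta'\int \eta^{p}v^{\beta'-1}\mathcal{A}(x,\nabla v)\cdot\nabla v\,dx \ge 0$ on the \emph{left} of the inequality, so it cannot be moved to produce a Caccioppoli bound for the energy; only $\beta' < 0$ works, giving Caccioppoli estimates for $v^{\gamma/p}$ with $\gamma = \beta' + p - 1 \in (0, p-1)$, and the cap $s < n(p-1)/(n-p) = \tfrac{n}{n-p}(p-1)$ is exactly the consequence of this restriction together with a single Sobolev step (if $\beta' > 0$ were admissible you would be proving the full Harnack inequality for supersolutions, which is false). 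So replace ``$\beta' > 0$ near the borderline'' by ``$\beta' \in (-(p-1),0)$ with $\gamma = \beta' + p - 1$ near $p-1$''. Two smaller bookkeeping points: the measure term produced by the test function is $\int \eta^{p} v^{\beta'}\,d|\nu|$ (the same power as in the test function), and the correct conversion using $v \ge k$ is $v^{\beta'} = v^{\gamma - (p-1)} \le k^{-(p-1)}v^{\gamma}$, after which your $\varepsilon$-trace estimate for Morrey measures applies; also, for nonnegative $\nu$ the measure term in the negative-power branch has a favorable sign and can simply be discarded, with $|\nu|$ needed only when $\nu$ is a charge. Finally, note that $p = n$ is allowed in the statement, so the Sobolev exponent $\chi = n/(n-p)$ must be replaced by an arbitrary finite $\chi > 1$ in that borderline case; with these corrections your argument is the intended proof.
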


The following H\"{o}lder estimate is a standard consequence of Lemma \ref{lem:weak_harnack}.
We refer to \cite[Section 8.9]{MR1814364} for detail. 
Hence, we may replace $\esssup$ by $\sup$ without loss of generality.

\begin{lemma}\label{lem:interior_hoelder_esti}
Suppose that $\nu \in \mathcal{M}^{\lambda}(2B)$ for some $n - p < \lambda \le n$.
Let $u \in W^{1, p}(2B)$ be a nonnegative weak supersolution to $- \divergence \mathcal{A}(x, \nabla u) = \nu$ in $2B$.
Then, for any $\theta \in (0, 1)$, we have
\[
\osc_{\theta B} u
\le
C \theta^{\alpha_{0}}
\left(
\osc_{B} u
+
\| \nu \|_{\mathcal{M}^{ \lambda }(2B)}^{1 / (p - 1)} \diam(B)^{ (\lambda - n + p) / (p - 1) }
\right),
\]
where $\osc u := \sup u - \inf u$ and $C$ and $\alpha_{0}$ are constants depending only on $n$, $p$, $L$ and $\lambda$.
\end{lemma}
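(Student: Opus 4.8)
The plan is to run the classical De Giorgi--Moser oscillation reduction, with Lemma~\ref{lem:weak_harnack} as the single quantitative input, along the lines of \cite[Section~8.9]{MR1814364}. I would first observe that if $u$ is not essentially bounded on $B$ then $\osc_{B}u=+\infty$ and the inequality is trivial, so one may assume $u\in L^{\infty}(B)$. Next I would fix once and for all an exponent $s=s(n,p)$ with $0<s<n(p-1)/(n-p)$ (any fixed admissible choice works; the constant in Lemma~\ref{lem:weak_harnack} then depends only on $n,p,L,q$), write $B=B(x_{0},R)$, and introduce, for $0<\rho\le R$,
\[
M(\rho):=\esssup_{B(x_{0},\rho)}u,\qquad m(\rho):=\essinf_{B(x_{0},\rho)}u,\qquad \omega(\rho):=M(\rho)-m(\rho),
\]
a nondecreasing function of $\rho$, together with $K:=\|\nu\|_{\mathsf{M}^{q}(2B)}^{1/(p-1)}$, so that $\|\nu\|_{\mathsf{M}^{q}(U)}^{1/(p-1)}\le K$ for every ball $U\subset 2B$.

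\emph{Oscillation reduction.} Fix $\rho$ with $0<4\rho\le R$, so $B(x_{0},4\rho)\subset B$ and $M(4\rho)<\infty$. On $B(x_{0},4\rho)$ the functions $u-m(4\rho)$ and $M(4\rho)-u$ are nonnegative; both are supersolutions of equations obeying \eqref{eqn:coercive}--\eqref{eqn:monotonicity} (the first with datum $\nu$; for the second one passes to the reflected vector field $\mathcal{A}(x,z)\mapsto-\mathcal{A}(x,-z)$, which again satisfies \eqref{eqn:coercive}--\eqref{eqn:monotonicity}). Applying Lemma~\ref{lem:weak_harnack} on $B(x_{0},2\rho)$, whose doubled ball is $B(x_{0},4\rho)$, gives
\[
\left(\fint_{B(x_{0},2\rho)}\bigl(u-m(4\rho)\bigr)^{s}\,dx\right)^{\!1/s}\le C\bigl(m(2\rho)-m(4\rho)+K\rho^{\beta}\bigr)
\]
and the analogous inequality with $M(4\rho)-u$ and $M(4\rho)-M(2\rho)$ in place of $u-m(4\rho)$ and $m(2\rho)-m(4\rho)$. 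Setting $k:=\tfrac12\bigl(M(4\rho)+m(4\rho)\bigr)$, one of the sets $\{u\le k\}$, $\{u\ge k\}$ fills at least half of $B(x_{0},2\rho)$; on the former $M(4\rho)-u\ge\tfrac12\omega(4\rho)$ and on the latter $u-m(4\rho)\ge\tfrac12\omega(4\rho)$, so in the corresponding inequality the left-hand side is $\ge c(n,p)\,\omega(4\rho)$. Using $\omega(2\rho)\le\omega(4\rho)-\bigl(m(2\rho)-m(4\rho)\bigr)$ and $\omega(2\rho)\le\omega(4\rho)-\bigl(M(4\rho)-M(2\rho)\bigr)$ and rearranging in either case then gives
\[
\omega(2\rho)\le\lambda\,\omega(4\rho)+C K\rho^{\beta},\qquad 0<\rho\le R/4,
\]
for some $\lambda=\lambda(n,p,L,q)\in(0,1)$.

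\emph{Iteration.} This recursion says $\omega(t)\le\lambda\,\omega(2t)+CKt^{\beta}$ for $0<t\le R/2$ with $\omega$ nondecreasing, and I would feed it into the elementary lemma \cite[Lemma~8.23]{MR1814364}; bounding the resulting data term yields, for all $0<\rho\le R$,
\[
\omega(\rho)\le C\left(\frac{\rho}{R}\right)^{\!\alpha_{0}}\bigl(\omega(R)+K R^{\beta}\bigr),
\]
where $\alpha_{0}=\alpha_{0}(n,p,L,q)\in(0,\min\{\beta,1\}]$ may be taken at most $\log(1/\lambda)/\log 2$ and is further decreased, if necessary, so that the geometric series carrying the $K\rho^{\beta}$ terms closes into the displayed form (the range $R/2<\rho\le R$ being trivial, as then $(\rho/R)^{\alpha_{0}}\ge2^{-\alpha_{0}}$). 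Taking $\rho=\theta R$ and using $\osc_{\theta B}u=\omega(\theta R)$, $\osc_{B}u=\omega(R)$, $R\le\diam(B)$ and $\theta^{\alpha_{0}}\le1$ gives the asserted estimate. Since this shows $u$ has a H\"{o}lder continuous representative on $B$, essential and ordinary oscillation agree there, which is the reduction from $\esssup,\essinf$ to $\sup,\inf$ mentioned after the statement.

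I expect the main obstacle to be not the iteration, which is mechanical, but two points of care: first, tracking that every constant produced (in Lemma~\ref{lem:weak_harnack}, in the measure-splitting step, and in \cite[Lemma~8.23]{MR1814364}) depends only on $n,p,L,q$ after the single choice of $s$; and second, the one-sided control of $M(4\rho)-u$, which for a bare supersolution of a non-odd operator is only a subsolution and hence not literally within the scope of Lemma~\ref{lem:weak_harnack}. This is precisely the point at which one must invoke the companion (local boundedness) estimate --- proved as in Lemma~\ref{lem:global_boundedness} --- and the fuller discussion of \cite[Section~8.9]{MR1814364}, or else apply the statement to solutions as it is used in the applications below, rather than quote Lemma~\ref{lem:weak_harnack} verbatim.
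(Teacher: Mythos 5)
Your argument is precisely the standard De Giorgi--Moser oscillation reduction via the weak Harnack inequality that the paper itself relies on (it offers no proof beyond the reference to \cite[Section 8.9]{MR1814364}), and it is correct; you also rightly flag that for a bare supersolution $M(4\rho)-u$ is only a subsolution, so the estimate really requires $u$ to be a solution, which is how the lemma is in fact applied throughout the paper.
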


Below, we assume the following capacity density condition:
\begin{equation}\label{eqn:CDC}
\exists \gamma > 0 \quad \text{s.t.} \quad
\frac{ \capacity_{p}( \cl{B(\xi, R)} \setminus \Omega, B(\xi, 2R)) }{ \capacity_{p}( \cl{B(\xi, R)}, B(\xi, 2R)) } \ge \gamma \quad \forall R > 0, \ \forall \xi \in \Gamma,
\end{equation}
where $\Gamma$ is a closed subset of $\partial \Omega$.

The following boundary estimate follows from Lemma \ref{lem:weak_harnack} and arguments in \cite[Section 8.10]{MR1814364}. 
See also \cite{MR492836} and \cite[Chapter 4]{MR1461542}.

\begin{lemma}\label{lem:boundary_hoelder_esti}
Assume that \eqref{eqn:CDC} holds with $\Gamma = \{ \xi \}$.
Suppose that $\nu \in \mathcal{M}^{\lambda}(\Omega)$ for some $n - p < \lambda \le n$. 
Let $B$ be a ball centered at $\xi \in \Gamma$ with radius $R$. 
Let $u$ be a weak solution to $- \divergence \mathcal{A}(x, \nabla u) = \nu$ in $\Omega \cap B(\xi, R)$.
Then, for any $\theta \in (0, 1)$, we have
\[
\osc_{\Omega \cap \theta B} u
\le
C_{2} \theta^{\alpha} \osc_{\Omega \cap B} u
+
\osc_{\partial \Omega \cap B} u
+
C_{3} \| \nu \|_{\mathcal{M}^{ \lambda }(\Omega)}^{1 / (p - 1)} R^{ (\lambda - n + p) / (p - 1) },
\]
where $C_{2}$, $C_{3}$ and $\alpha$ are constants depending only on $n$, $p$, $L$, $\lambda$ and $\gamma$.
\end{lemma}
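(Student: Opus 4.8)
The plan is to run the classical boundary oscillation‑decay argument of Maz'ya and Gariepy--Ziemer (the scheme of \cite[\S 8.10]{MR1814364}), with two modifications: the interior weak Harnack inequality used there is replaced by Lemma \ref{lem:weak_harnack}, which carries the measure‑data term $\|\nu\|_{\mathsf{M}^{q}(\Omega)}^{1/(p-1)}(\mathrm{radius})^{\beta}$, and the exterior density hypothesis is the capacity condition \eqref{eqn:CDC} rather than a Lebesgue‑volume condition. Throughout set $B_{\rho}:=B(\xi,\rho)$ for $0<\rho\le R$, $M(\rho):=\sup_{\Omega\cap B_{\rho}}u$, $m(\rho):=\inf_{\Omega\cap B_{\rho}}u$, $\omega(\rho):=\osc_{\Omega\cap B_{\rho}}u$ (finite by Lemma \ref{lem:global_boundedness}), let $M^{\partial}(\rho),m^{\partial}(\rho)$ denote the suprema and infima of $u$ over $\partial\Omega\cap B_{\rho}$ in the Sobolev sense of Lemma \ref{lem:global_boundedness}, and abbreviate $k(\rho):=\|\nu\|_{\mathsf{M}^{q}(\Omega)}^{1/(p-1)}\rho^{\beta}$. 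One may assume $\Omega\cap B_{\theta R}\ne\emptyset$, otherwise the assertion is vacuous. I would reduce the whole estimate to the one‑step bound
\[
\omega(\rho/2)\ \le\ (1-\sigma)\,\omega(\rho)\ +\ \sigma\,\osc_{\partial\Omega\cap B_{\rho}}u\ +\ C\,k(\rho),\qquad 0<\rho\le R,
\]
for constants $\sigma=\sigma(n,p,L,q,\gamma)\in(0,1)$ and $C=C(n,p,L,q,\gamma)$.

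The heart of the matter is this one‑step ``growth lemma''. Fix $\rho$, put $\ell:=(M(\rho)-M^{\partial}(\rho))_{+}$, and assume $\ell>0$ (the bound for $M$ below is trivial when $\ell=0$). Then $v:=M(\rho)-u\ge 0$ is a weak supersolution of $-\divergence\mathcal{A}(x,\nabla v)\ge-\nu_{+}$ in $\Omega\cap B_{\rho}$ with $v\ge\ell$ on $\partial\Omega\cap B_{\rho}$ in the Sobolev sense, and $\|\nu_{+}\|_{\mathsf{M}^{q}(\Omega)}\le\|\nu\|_{\mathsf{M}^{q}(\Omega)}$. I would truncate $v$ from above at $\ell$ and glue the result by the constant $\ell$ across $\partial\Omega$ (both operations preserve the supersolution property), obtaining $\bar v\in W^{1,p}(B_{\rho})$ equal to $\min\{v,\ell\}$ on $\Omega\cap B_{\rho}$ and to $\ell$ on $B_{\rho}\setminus\Omega$; thus $\bar v\ge 0$, $\bar v=\ell$ q.e.\ on $B_{\rho}\setminus\Omega$, $\bar v$ is a weak supersolution of $-\divergence\mathcal{A}(x,\nabla\bar v)\ge-\nu_{+}$ in $B_{\rho}$, and $M(\rho)-M(\rho/2)\ge\essinf_{B_{\rho/2}}\bar v$ (since $\bar v\le M(\rho)-u$ on $\Omega\cap B_{\rho/2}$, which may be assumed nonempty). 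Applying Lemma \ref{lem:weak_harnack} to $\bar v$ on the pair $B_{\rho/2}\subset B_{\rho}$ with a fixed $s_{0}=s_{0}(n,p)\in(0,n(p-1)/(n-p))$ bounds $(\fint_{B_{\rho/2}}\bar v^{\,s_{0}}\,dx)^{1/s_{0}}$ from above by $C(\essinf_{B_{\rho/2}}\bar v+k(\rho))$. In the opposite direction, \eqref{eqn:CDC} at $\xi$ with radius $\rho/2$ says $\overline{B_{\rho/2}}\setminus\Omega$ is uniformly $p$‑fat, $\capacity_{p}(\overline{B_{\rho/2}}\setminus\Omega,B_{\rho})\ge\gamma\,\capacity_{p}(\overline{B_{\rho/2}},B_{\rho})$; since $\bar v\ge\ell$ q.e.\ on that set, a standard capacitary lower bound for nonnegative supersolutions dominating a uniformly $p$‑fat set — obtained by comparison with the $\mathcal{A}$‑capacitary potential of $\overline{B_{\rho/2}}\setminus\Omega$ in $B_{\rho}$, or equivalently by a De Giorgi iteration for $(\ell-\bar v)_{+}$ fed by a Maz'ya‑type capacitary Sobolev inequality (cf.\ \cite[Ch.\ 4]{MR1461542}, \cite{MR2305115}) — yields $(\fint_{B_{\rho/2}}\bar v^{\,s_{0}}\,dx)^{1/s_{0}}\ge c(n,p,L,\gamma)\,\ell-C\,k(\rho)$, the error appearing because $\bar v$ carries the data $-\nu_{+}$. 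Chaining these gives $M(\rho/2)\le(1-\sigma)M(\rho)+\sigma M^{\partial}(\rho)+Ck(\rho)$; applying the same reasoning to $-u$, which solves an equation of the same structural type with data $-\nu$ and $\|-\nu\|_{\mathsf{M}^{q}(\Omega)}=\|\nu\|_{\mathsf{M}^{q}(\Omega)}$, yields the twin lower bound $m(\rho/2)\ge(1-\sigma)m(\rho)+\sigma m^{\partial}(\rho)-Ck(\rho)$, and subtracting produces the one‑step estimate.

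It then remains to iterate. Writing $a_{j}:=\omega(2^{-j}R)$ and $\omega_{\partial}:=\osc_{\partial\Omega\cap B_{R}}u$, the one‑step estimate (using $\osc_{\partial\Omega\cap B_{2^{-j}R}}u\le\omega_{\partial}$) reads $a_{j+1}\le(1-\sigma)a_{j}+\sigma\omega_{\partial}+Ck(2^{-j}R)$, and since $k(2^{-j}R)=2^{-j\beta}k(R)\le k(R)$ (here $\beta>0$ is used), telescoping gives $a_{j}\le(1-\sigma)^{j}\omega(R)+\omega_{\partial}+(C/\sigma)k(R)$. Given $\theta\in(0,1)$, choosing $j\ge 0$ with $2^{-(j+1)}\le\theta<2^{-j}$ and using the monotonicity of $\omega$ together with $(1-\sigma)^{j}=(2^{-j})^{\alpha}\le(2\theta)^{\alpha}$ for $\alpha:=\log_{2}\tfrac1{1-\sigma}>0$, one obtains
\[
\osc_{\Omega\cap\theta B}u\ \le\ 2^{\alpha}\theta^{\alpha}\,\osc_{\Omega\cap B}u\ +\ \osc_{\partial\Omega\cap B}u\ +\ \tfrac{C}{\sigma}\,\|\nu\|_{\mathsf{M}^{q}(\Omega)}^{1/(p-1)}R^{\beta},
\]
which is the assertion with $C_{2}=2^{\alpha}$, $C_{3}=C/\sigma$, and $\alpha$ depending only on $n,p,L,q,\gamma$.

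The step I expect to be the genuine obstacle is the capacitary lower bound $(\fint_{B_{\rho/2}}\bar v^{\,s_{0}}\,dx)^{1/s_{0}}\gtrsim\ell$: the exterior set $\overline{B_{\rho/2}}\setminus\Omega$ on which $\bar v\equiv\ell$ may be Lebesgue‑null, so — in contrast with the linear volume‑density setting of \cite[\S 8.10]{MR1814364} — one cannot bound the $L^{s_{0}}$‑average below by a volume fraction and must genuinely exploit \eqref{eqn:CDC} through the $p$‑capacity (via the $\mathcal{A}$‑capacitary potential or a capacitary Sobolev inequality). A related technical care is needed to confirm that the truncated‑and‑glued $\bar v$ is an honest weak supersolution across $\partial\Omega$ (using quasicontinuity, and that q.e.\ equality on the exterior suffices for the gluing) and to keep every error term proportional to $\|\nu\|_{\mathsf{M}^{q}(\Omega)}^{1/(p-1)}$ with the correct radius‑power $\beta$ from \eqref{eqn:beta}.
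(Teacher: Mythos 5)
Your proposal is correct and follows essentially the same route the paper intends: it gives no detailed proof but derives the lemma from the weak Harnack inequality (Lemma \ref{lem:weak_harnack}) and the classical boundary oscillation-decay scheme of \cite[Sect.\ 8.10]{MR1814364}, with the capacity density condition \eqref{eqn:CDC} entering through the capacitary growth lemma as in \cite{MR492836} and \cite[Ch.\ 4]{MR1461542} --- exactly the extension-by-boundary-values, one-step contraction, and dyadic iteration you describe. The only soft spot is the crux you yourself flag: with measure data the ``comparison with the $\mathcal{A}$-capacitary potential'' route is delicate (nonlinear operators admit no superposition to absorb the $-\nu_{+}$ term), so the honest proof of the lower bound $\bigl(\fint_{B_{\rho/2}}\bar v^{\,s_{0}}\bigr)^{1/s_{0}}\gtrsim \ell - Ck(\rho)$ is the De Giorgi/Maz'ya capacitary iteration of the cited references, which is precisely what the paper relies on as well.
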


\section{Construction of supersolutions}\label{sec:barrier}

The main result of this section is Theorem \ref{thm:barrier}.
To state it, we introduce the following new Morrey spaces.

\begin{definition}\label{def:ungrounded_morrey}
Let $\Gamma \subset \partial \Omega$ be closed.
For $0 \le \lambda \le n$, we define
\[
\mathsf{M}_{\Gamma}^{\lambda}(\Omega)
:=
\left\{
\nu = \nu_{+} - \nu_{-}  \colon \nu_{\pm} \in \mathcal{M}^{+}(\Omega), \  \| \nu \|_{\mathsf{M}_{\Gamma}^{\lambda}(\Omega)} < \infty
\right\},
\]
where 
\begin{equation}\label{eqn:def_of_norm}
\| \nu \|_{\mathsf{M}_{\Gamma}^{\lambda}(\Omega)}
:=
\sup_{ \substack{x \in \Omega \\ 0 < r < \delta_{\Gamma}(x) / 2 } }
r^{- \lambda} |\nu|(\Omega \cap B(x, r)).
\end{equation}
We denote by $\mathsf{M}_{\Gamma}^{\lambda, +}(\Omega)$ the set $\mathsf{M}_{\Gamma}^{\lambda}(\Omega) \cap \mathcal{M}^{+}(\Omega)$.
\end{definition}

For $\emptyset \neq \Gamma \subsetneq \Gamma' \subsetneq \partial \Omega$, we have
\begin{equation}\label{eqn:inclusion}
\mathcal{M}^{\lambda}(\Omega)
=
\mathsf{M}^{\lambda}_{\emptyset}(\Omega)
\subset
\mathsf{M}^{\lambda}_{\Gamma}(\Omega)
\subset
\mathsf{M}^{\lambda}_{\Gamma'}(\Omega)
\subset
\mathsf{M}^{\lambda}_{\partial \Omega}(\Omega)
\subset
\mathcal{M}^{\lambda}_{\loc}(\Omega).
\end{equation}
Generally, these inclusions hold strictly (see, Remark \ref{rem:inclusion}).

\begin{theorem}\label{thm:barrier}
Assume that a bounded open set $\Omega$ satisfies \eqref{eqn:CDC}.
Suppose that $\nu \in \mathsf{M}_{\Gamma}^{\lambda, +}(\Omega)$ for some $n - p < \lambda \le n$.
Then, there exists $s \in W^{1, p}_{\loc}(\Omega) \cap C(\Omega)$ satisfying
\begin{equation}\label{eqn:barrier}
- \divergence \mathcal{A}\left(x, \nabla s \right) \ge \nu \quad \text{in} \ \Omega
\end{equation}
and
\begin{equation}\label{eqn:bound_of_barrier}
\begin{split}
& 
\frac{1}{C} \, (\sup_{\Omega} \delta_{\Gamma}( \cdot ) )^{\beta - \beta_{0}} \| \nu \|_{\mathsf{M}_{\Gamma}^{ \lambda }(\Omega)}^{1 / (p - 1)} \delta_{\Gamma}(x)^{\beta_{0}}
\\
& \quad
\le
s(x)
\le
C \, ( \sup_{\Omega} \delta_{\Gamma}( \cdot ) )^{\beta - \beta_{0}} \| \nu \|_{\mathsf{M}_{\Gamma}^{ \lambda }(\Omega)}^{1 / (p - 1)} \delta_{\Gamma}(x)^{\beta_{0}}
\end{split}
\end{equation}
for all $x \in \Omega$,
where $C$ and $\beta_{0}$ are positive constants depending only on $n$, $p$, $L$, $\lambda$ and $\gamma$,
and $\beta = (\lambda - n + p) / (p - 1)$.
\end{theorem}

We prove this theorem using the following two lemmas.

\begin{lemma}[{\cite[Lemma 3.1]{hara2022strong}}]\label{lem:barayage}\label{lem:glueing}
Let $f_{i} \in L^{1}_{\loc}(\Omega, \nu)$ for $i = 1, 2$.
Let $u_{1}, u_{2} \in W^{1, p}_{\loc}(\Omega)$ be weak supersolutions to
$- \divergence \mathcal{A}(x, \nabla u_{i}) = f_{i} \nu$ in $\Omega$.
Then,
\begin{equation*}\label{eqn:glueing}
- \divergence \mathcal{A}(x, \nabla \min \{ u_{1}, u_{2} \} ) 
\ge
\min \{ f_{1}, f_{2} \} \nu \quad \text{in} \ \Omega
\end{equation*}
in the sense of distributions.
\end{lemma}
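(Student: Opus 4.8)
The plan is to verify the stated distributional inequality by testing against nonnegative $\varphi \in C_c^\infty(\Omega)$ and exploiting the fact that $\min\{u_1,u_2\}$ agrees locally with one of the two supersolutions, together with the monotonicity structure \eqref{eqn:monotonicity} of $\mathcal{A}$. First I would recall the standard fact that for $u_1, u_2 \in W^{1,p}_{\loc}(\Omega)$ one has $\min\{u_1,u_2\} = u_1 - (u_1-u_2)_+ \in W^{1,p}_{\loc}(\Omega)$, with $\nabla\min\{u_1,u_2\} = \nabla u_1$ a.e.\ on $\{u_1 \le u_2\}$ and $= \nabla u_2$ a.e.\ on $\{u_1 > u_2\}$. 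The inequality $- \divergence \mathcal{A}(x, \nabla u_i) \ge f_i \nu$ means that for every $0 \le \varphi \in C_c^\infty(\Omega)$,
\[
\int_\Omega \mathcal{A}(x, \nabla u_i) \cdot \nabla \varphi \, dx \ge \int_\Omega \varphi f_i \, d\nu ,
\]
and more generally this holds for $0 \le \varphi \in W^{1,p}_0$ with compact support (after the usual approximation); I would make sure such test functions are admissible since the argument will need $\varphi$ of the form $\varphi \eta$ with $\eta$ a truncation of a difference of the $u_i$.

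The core computation: fix $0 \le \varphi \in C_c^\infty(\Omega)$ and, for $\varepsilon > 0$, introduce the Lipschitz cutoff $H_\varepsilon(t) = \min\{1, t_+/\varepsilon\}$, so that $\eta_\varepsilon := H_\varepsilon(u_1 - u_2) \to \mathbf{1}_{\{u_1 > u_2\}}$ pointwise as $\varepsilon \to 0$, $\eta_\varepsilon \in W^{1,p}_{\loc}$ with $0 \le \eta_\varepsilon \le 1$, and $\nabla \eta_\varepsilon$ is supported in $\{0 < u_1 - u_2 < \varepsilon\}$. Test the supersolution inequality for $u_2$ with $\varphi \eta_\varepsilon$ and for $u_1$ with $\varphi(1-\eta_\varepsilon)$, then add. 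The "good" terms combine to $\int_\Omega \mathcal{A}(x, \nabla \min\{u_1,u_2\}) \cdot \nabla\varphi \,(\text{up to }\eta_\varepsilon\text{-weights})$ plus, on the right, $\int \varphi(\eta_\varepsilon f_2 + (1-\eta_\varepsilon) f_1)\,d\nu$, which converges to $\int_{\{u_1>u_2\}}\varphi f_2\,d\nu + \int_{\{u_1\le u_2\}}\varphi f_1\,d\nu \ge \int_\Omega \varphi \min\{f_1,f_2\}\,d\nu$. The "bad" terms are the ones where the gradient hits $\eta_\varepsilon$, namely
\[
\int_\Omega \varphi \bigl( \mathcal{A}(x,\nabla u_2) - \mathcal{A}(x,\nabla u_1) \bigr) \cdot \nabla \eta_\varepsilon \, dx
= \frac{1}{\varepsilon}\int_{\{0 < u_1 - u_2 < \varepsilon\}} \varphi \bigl( \mathcal{A}(x,\nabla u_2) - \mathcal{A}(x,\nabla u_1) \bigr) \cdot (\nabla u_1 - \nabla u_2)\, dx ,
\]
and here is exactly where \eqref{eqn:monotonicity} enters: the integrand $\bigl(\mathcal{A}(x,\nabla u_1) - \mathcal{A}(x,\nabla u_2)\bigr)\cdot(\nabla u_1 - \nabla u_2)$ is $\ge 0$, so this bad term has the sign that works in our favor (it appears with the right orientation so that dropping it preserves the inequality), or else it tends to $0$ by dominated convergence since $|\{0 < u_1-u_2 < \varepsilon\}\cap\spt\varphi| \to |\{u_1 = u_2\}\cap\spt\varphi \cap \cdots\}|$ and the integrand is controlled; I would argue it is negligible in the limit. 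Passing $\varepsilon \to 0$ then yields $\int_\Omega \mathcal{A}(x,\nabla\min\{u_1,u_2\})\cdot\nabla\varphi\,dx \ge \int_\Omega \varphi\min\{f_1,f_2\}\,d\nu$, which is the claim.

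The main obstacle I anticipate is the careful handling of the bad term near the coincidence set $\{u_1 = u_2\}$ and the justification that $\varphi\eta_\varepsilon$, $\varphi(1-\eta_\varepsilon)$ are legitimate test functions in the local weak formulation (they are in $W^{1,p}$ with compact support in $\Omega$, but one should confirm the supersolution inequality extends from $C_c^\infty$ to such functions, typically by mollification plus the growth bound \eqref{eqn:growth} giving equi-integrability of $\mathcal{A}(x,\nabla u_i)$). Since this is cited as \cite[Lemma 3.1]{hara2022strong}, I would in fact simply invoke that reference; the sketch above is the standard Dal Maso--Murat--Orsina--Prignet / Kilpeläinen--Malý-type gluing argument, and the sign of the monotonicity term is what makes the minimum a supersolution with the minimum of the densities.
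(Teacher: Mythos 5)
The paper offers no proof of this lemma at all---it is imported verbatim from \cite[Lemma 3.1]{hara2022strong}---so there is nothing to compare against line by line; your sketch is the standard pasting argument, it is correct, and your stated fallback of simply invoking the reference is exactly what the paper does. Two small cautions: only the sign coming from \eqref{eqn:monotonicity}, i.e.\ $\bigl(\mathcal{A}(x,\nabla u_1)-\mathcal{A}(x,\nabla u_2)\bigr)\cdot(\nabla u_1-\nabla u_2)\ge 0$, disposes of the ``bad'' term (the alternative ``dominated convergence'' route you hedge with fails because of the $1/\varepsilon$ factor), and on the right-hand side it is cleaner to observe that $(1-\eta_\varepsilon)f_1+\eta_\varepsilon f_2\ge\min\{f_1,f_2\}$ pointwise for every fixed $\varepsilon$, which sidesteps any question of $\nu$-a.e.\ convergence of $\eta_\varepsilon$.
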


\begin{lemma}\label{lem:auxiliary_func}
Assume that \eqref{eqn:CDC} holds.
Let $B$ be a ball centered at $\xi \in \Gamma$ with radius $R \le 1$. 
Let $\nu \in \mathcal{M}^{\lambda, +}(\Omega)$.
Let $\beta$, $C_{1}$, $C_{2}$ $C_{3}$ and $\alpha$ be constants in Lemmas \ref{lem:global_boundedness} and \ref{lem:boundary_hoelder_esti}.
Set $\beta_{0} := \min\{ \alpha / 2, \beta \}$ and take a constant $\theta$ such that
\begin{equation}\label{eqn:assumption_on_theta}
0 < 
\theta 
\le
\left( \frac{1}{4^{\alpha} \cdot 16 \cdot C_{2}} \right)^{1 / (\alpha - \beta_{0})},
\end{equation}
Assume also that
\begin{equation}\label{eqn:smallness_of_nu}
\| \nu \|_{\mathcal{M}^{\lambda}(B)}^{1 / (p - 1)}
\le \min \left\{ \frac{\theta^{\beta_{0}}}{8 C_{3}}, \frac{1}{C_{1}} \right\}.
\end{equation}
Let $u \in W^{1, p}(\Omega) \cap C(\Omega)$ be a solution to the problem
\begin{equation}\label{eqn:def_of_auxiliary_func}
\begin{cases}
\displaystyle
- \divergence \mathcal{A}(x, \nabla u) = \nu & \text{in} \ \Omega \cap B,
\\
u \in \eta + W_{0}^{1, p}(\Omega \cap B),
\end{cases}
\end{equation}
where $\eta \in C^{\infty}(\cl{\Omega})$ such that
\[
\eta = \frac{1}{4} (\theta R)^{\beta_{0}} \ \text{on} \ \cl{\Omega \cap B / 2},
\quad
\eta = R^{\beta_{0}} \ \text{on} \ \cl{\Omega} \setminus B,
\]
\[
\frac{1}{4} (\theta R)^{\beta_{0}} \le \eta \le R^{\beta_{0}} \ \text{in} \ \cl{\Omega}.
\]
Then, we have
\begin{align}
\frac{1}{4} (\theta R)^{\beta_{0}} \le u \le 2 R^{\beta_{0}} \quad & \text{in} \ \Omega \cap B, \label{eqn:global_bound_of_auxiliary_func}
\\
u \le \frac{1}{2} (\theta R)^{\beta_{0}} \quad & \text{on} \ \Omega \cap 2 \theta B. \label{eqn:boundary_bound_of_auxiliary_func}
\end{align}
\end{lemma}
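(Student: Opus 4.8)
The plan is to verify the three displayed inequalities in the stated order. The lower bound in \eqref{eqn:global_bound_of_auxiliary_func} will come from comparison with a constant; the upper bound in \eqref{eqn:global_bound_of_auxiliary_func} from the global boundedness estimate (Lemma \ref{lem:global_boundedness}) applied on $\Omega \cap B$; and \eqref{eqn:boundary_bound_of_auxiliary_func} from feeding \eqref{eqn:global_bound_of_auxiliary_func} into the boundary H\"{o}lder estimate (Lemma \ref{lem:boundary_hoelder_esti}) on the concentric ball $B/2$, exploiting that the Dirichlet datum $\eta$ is \emph{constant} on $\cl{\Omega \cap B/2}$. The thresholds \eqref{eqn:assumption_on_theta}--\eqref{eqn:smallness_of_nu} are calibrated precisely so that the resulting oscillation estimate comes out below $\tfrac14(\theta R)^{\beta_0}$. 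The single non-elementary ingredient---and the main obstacle---is that, by \eqref{eqn:CDC}, the center $\xi$ is a regular boundary point, so that $u$ attains the value $\eta(\xi)$ at $\xi$ continuously; everything else is bookkeeping.

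\emph{The bounds in \eqref{eqn:global_bound_of_auxiliary_func}.} Since $\nu \ge 0$, $u$ is a weak supersolution of $-\divergence\mathcal{A}(x,\nabla u) = 0$ in $\Omega \cap B$, while the constant $c := \tfrac14(\theta R)^{\beta_0}$ is a weak solution of it (indeed $\mathcal{A}(x,0) = 0$ by \eqref{eqn:growth}). Because $\eta \ge c$ on $\cl{\Omega}$ and $u - \eta \in W_0^{1,p}(\Omega \cap B)$, we have $0 \le (c - u)_+ \le (\eta - u)_+ \in W_0^{1,p}(\Omega \cap B)$, hence $(c - u)_+ \in W_0^{1,p}(\Omega \cap B)$, and the comparison principle gives $u \ge c$ in $\Omega \cap B$. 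For the upper bound, apply Lemma \ref{lem:global_boundedness} to $u$ on $\Omega \cap B$: since $\eta \le R^{\beta_0}$ and $u - \eta \in W_0^{1,p}(\Omega \cap B)$ one has $(u - R^{\beta_0})_+ \in W_0^{1,p}(\Omega \cap B)$, i.e. $\sup_{\partial(\Omega \cap B)} u \le R^{\beta_0}$; and since $\diam(\Omega \cap B) \le 2R \le 2$, $\beta_0 \le \beta$, and $\|\nu\|_{\mathsf{M}^{q}(B)}^{1/(p-1)} \le 1/C_1$ by \eqref{eqn:smallness_of_nu}, the correction term of Lemma \ref{lem:global_boundedness} is bounded by $R^{\beta_0}$. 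Hence $u \le 2R^{\beta_0}$ in $\Omega \cap B$.

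\emph{The bound \eqref{eqn:boundary_bound_of_auxiliary_func}.} One checks from \eqref{eqn:assumption_on_theta} that $4\theta < 1$, so Lemma \ref{lem:boundary_hoelder_esti} applies to $u$ on $B/2$ (centered at $\xi \in \Gamma$, radius $R/2$, where $u$ solves the equation) with parameter $4\theta$; since scaling $B/2$ by $4\theta$ produces $2\theta B$, this yields
\[
\osc_{\Omega \cap 2\theta B} u \le C_2 (4\theta)^{\alpha} \osc_{\Omega \cap B/2} u + \osc_{\partial \Omega \cap B/2} u + C_3 \|\nu\|_{\mathsf{M}^{q}(B)}^{1/(p-1)} (R/2)^{\beta}.
\]
By \eqref{eqn:global_bound_of_auxiliary_func}, $\osc_{\Omega \cap B/2} u \le \osc_{\Omega \cap B} u \le 2R^{\beta_0}$, so the first term is at most $2\cdot 4^{\alpha} C_2 \theta^{\alpha} R^{\beta_0} \le \tfrac18(\theta R)^{\beta_0}$ by \eqref{eqn:assumption_on_theta}; the second term vanishes since $u = \eta \equiv \tfrac14(\theta R)^{\beta_0}$ on $\partial\Omega \cap B/2 \subset \cl{\Omega \cap B/2}$; and the third term is at most $\tfrac18(\theta R)^{\beta_0}$ by \eqref{eqn:smallness_of_nu} together with $R \le 1$ and $\beta_0 \le \beta$. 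Thus $\osc_{\Omega \cap 2\theta B} u \le \tfrac14(\theta R)^{\beta_0}$. Finally, \eqref{eqn:CDC} makes $\xi$ a regular boundary point and $\eta$ is continuous (in fact constant near $\xi$), so $u$ extends continuously to $\xi$ with $u(\xi) = \eta(\xi) = \tfrac14(\theta R)^{\beta_0}$ (alternatively, iterate Lemma \ref{lem:boundary_hoelder_esti} on the balls $B(\xi, 2^{-k}R)$, whose right-hand sides tend to $0$ since $\eta$ is locally constant at $\xi$); as $\xi \in \cl{\Omega \cap 2\theta B}$, this forces $\inf_{\Omega \cap 2\theta B} u \le \tfrac14(\theta R)^{\beta_0}$. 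Combining, $\sup_{\Omega \cap 2\theta B} u \le \inf_{\Omega \cap 2\theta B} u + \osc_{\Omega \cap 2\theta B} u \le \tfrac12(\theta R)^{\beta_0}$, which is \eqref{eqn:boundary_bound_of_auxiliary_func}. As noted, the delicate step is this boundary continuity at $\xi$: it rests on the Wiener-type boundary regularity supplied by \eqref{eqn:CDC} (cf. \cite{MR492836,MR1461542}) and on $u$ coinciding with $\eta$ on $\partial\Omega \cap B/2$ in the Sobolev sense; granting that, only the tracking of constants remains.
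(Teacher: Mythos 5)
Your proposal is correct and follows essentially the same route as the paper: comparison with the constant $\tfrac14(\theta R)^{\beta_0}$ for the lower bound, Lemma \ref{lem:global_boundedness} for the upper bound, and the boundary H\"{o}lder estimate on $B/2$ with parameter $4\theta$ (using that $\eta$ is constant there) for \eqref{eqn:boundary_bound_of_auxiliary_func}. The only difference is cosmetic: the paper applies Lemma \ref{lem:boundary_hoelder_esti} directly in a one-sided ``$\sup$'' form, whereas you use the stated oscillation form and recover the supremum bound by pinning down $\inf_{\Omega\cap 2\theta B}u$ via the regularity of $\xi$ under \eqref{eqn:CDC} --- a legitimate, if slightly longer, way to close the same step.
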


\begin{proof}
We first confirm the existence of $u$.
By Lemma \ref{lem:HW}, we have $\nu \in (W_{0}^{1, p}(\Omega \cap B))^{*}$.
Therefore, by the Minty-Browder theorem, there exists a solution to \eqref{eqn:def_of_auxiliary_func} (see, \cite[p.177]{MR0259693} for detail).
Continuity of $u$ in $\Omega$ follows from Lemmas \ref{lem:interior_hoelder_esti} and \ref{lem:boundary_hoelder_esti}.
The latter inequality in \eqref{eqn:global_bound_of_auxiliary_func} is a consequence of
Lemma \ref{lem:global_boundedness}, \eqref{eqn:smallness_of_nu} and that $R^{\beta} \le R^{\beta_{0}}$.
The former inequality in \eqref{eqn:global_bound_of_auxiliary_func} follows from the comparison principle.
Let us prove \eqref{eqn:boundary_bound_of_auxiliary_func}. By assumption, we have $C_{2} 4^{\alpha} \theta^{\alpha - \beta_{0}} \le 1 / 16$.
Therefore, Lemma \ref{lem:boundary_hoelder_esti} gives
\[
\begin{split}
\sup_{\Omega \cap 2 \theta B} u
& \le
C_{2} 4^{\alpha} \theta^{\alpha - \beta_{0}} \theta^{\beta_{0}} \sup_{\Omega \cap B / 2} u
+
\sup_{\partial \Omega \cap B / 2} u
+
C_{3} \| \nu \|_{\mathcal{M}^{ \lambda }(\Omega \cap B / 2)}^{1 / (p - 1)} R^{\beta}
\\
& \le
\frac{1}{16} \theta^{\beta_{0}} \cdot 2 R^{\beta_{0}}
+
\sup_{\partial \Omega \cap B / 2} u
+
C_{3}\| \nu \|_{\mathcal{M}^{ \lambda }(\Omega \cap B / 2)}^{1 / (p - 1)} R^{\beta}
\\
& \le
\frac{1}{8} \theta^{\beta_{0}} R^{\beta_{0}}
+
\frac{1}{4} (\theta R)^{\beta_{0}}
+
\frac{1}{8} \theta^{\beta_{0}} R^{\beta_{0}}
=
\frac{1}{2} (\theta R)^{\beta_{0}}.
\end{split}
\]
Here, we used \eqref{eqn:global_bound_of_auxiliary_func} and that $R^{\beta} \le R^{\beta_{0}}$.
\end{proof}

\begin{proof}[Proof of Theorem \ref{thm:barrier}]
By rescaling $\Omega$, we may assume that $\sup_{x \in \Omega} \delta_{\Gamma}(x) \le 1$.
We also assume that $\| \nu \|_{\mathsf{M}_{\Gamma}^{ \lambda }(\Omega)}$ is smaller than a constant to be determined later.

Let $\beta_{0}$ be a constant in Lemma \ref{lem:auxiliary_func}, and take $0 <  \theta < 8^{-1 / \beta_{0}}$ satisfying \eqref{eqn:assumption_on_theta}.
For each $k \in \Z$, we set $\Gamma_{k} := \{ x \in \Omega \colon \delta_{\Gamma} \le \theta^{k} \}$.
Consider $\nu_{k} := \mathbf{1}_{\Omega \setminus \Gamma_{k + 2}} \nu$.
If we take a ball $B = B(\xi, \theta^{k})$ such that $\xi \in \Gamma$, then we have
\[
\| \nu_{k} \|_{ \mathcal{M}^{ \lambda }(\Omega \cap B) }
=
\sup_{ \substack{x \in \Omega \cap B \\ 0 < r < 2 \theta^{k}} }
r^{-\lambda}
\nu_{k}( (\Omega \cap B) \cap B(x, r) ).
\]
We cover $B(x, r)$ by finite balls $\{ \hat{B}_{i} \}_{i = 1}^{\mathcal{N}}$ with radius $\theta^{2} r / 6$,
where $\mathcal{N}$ is a constant depending only on $n$ and $\theta$.
If $\hat{B}_{i} \cap (\Omega \setminus \Gamma_{k + 2}) \neq \emptyset$, then $2 \hat{B}_{i} \cap \Gamma = \emptyset$.
Therefore, we have
\[
\begin{split}
\nu_{k}( (\Omega \cap B) \cap B(x, r) )
& \le
\sum_{i = 1}^{ \mathcal{N} } \nu_{k}( (\Omega \cap B) \cap \hat{B}_{i} )
\\
& \le
\sum_{i = 1}^{ \mathcal{N} } \nu_{k}( \hat{B}_{i} )
\le
\mathcal{N} \| \nu \|_{\mathsf{M}_{\Gamma}^{\lambda}(\Omega)} r^{\lambda}.
\end{split}
\]
Combining the two inequalities, we get
\[
\| \nu_{k} \|_{ \mathcal{M}^{\lambda}(\Omega \cap B) }
\le
\mathcal{N} \| \nu \|_{\mathsf{M}_{\Gamma}^{\lambda}(\Omega)}.
\]
Below, we assume that the left-hand side satisfies \eqref{eqn:smallness_of_nu}.

Next, we cover $\Gamma_{k + 1}$ by finitely many balls $\{ 2 \theta B_{k, j} \}_{j \in J_{k}}$,
where $B_{k, j}$ is a ball centered at $\xi_{j} \in \Gamma$ with radius $\theta^{k}$.
Note that
\[
\Gamma_{k + 1} \subset D_{k} := \Omega \cap \bigcup_{B_{j} \in J_{k}} B_{j} \subset \Gamma_{k}.
\]
For each $k \in \Z$ and $j \in J_{k}$, let $u_{k, j}$ be a function in Lemma \ref{lem:auxiliary_func} ($\Omega = \Omega \cap B_{k, j}$, $B = B_{k, j}$, $\nu = \nu_{k}$).
Using them, we define $v_{k} \in W^{1, p}_{\loc}(\Omega) \cap C(\Omega)$ by
\[
v_{k}(x)
:=
\begin{cases}
\displaystyle
\min_{ \substack{ j \in J_{k} \\ B_{k, j} \ni x} } u_{k, j}(x) &  x \in D_{k},
\\
\theta^{k \beta_{0}} & x \in \Omega \setminus D_{k}.
\end{cases}
\]
By \eqref{eqn:global_bound_of_auxiliary_func} and \eqref{eqn:boundary_bound_of_auxiliary_func}, we have
\begin{align}
\frac{1}{4} \theta^{(k + 1) \beta_{0}} \le v_{k} \le 2 \theta^{k \beta_{0}} \quad \text{in} \ \Gamma_{k}, \label{eqn:patched_func_01}
\\
v_{k} \le \frac{1}{2} \theta^{(k + 1) \beta_{0}} \quad \text{on} \ \Gamma_{k + 1}.  \label{eqn:patched_func_03}
\end{align}
Also, by Lemma \ref{lem:glueing}, we have
\begin{equation}\label{eqn:rigidity_eqn}
- \divergence \mathcal{A}(x, \nabla v_{k}) \ge \nu_{k} \quad \text{in} \ D_{k}.
\end{equation}
Define a function $s \in W^{1, p}_{\loc}(\Omega) \cap C(\Omega)$ by
\[
s(x) = \inf_{\Gamma_{k} \ni x} v_{k}(x).
\]
Recall that $\theta^{- \beta_{0}} > 8$.
Hence, for $k - 2 \ge k'$, we have
\begin{equation*}\label{eqn:bound_of_barrier2}
v_{k}(x)
\le
2 \theta^{k \beta_{0}}
=
\left( 2 \theta^{\beta_{0}} \right) \theta^{(k - 1)\beta_{0}}
<
\frac{1}{4} \theta^{(k' + 1) \beta_{0}}
\le
v_{ k' }(x),
\quad \forall x \in \Gamma_{k}.
\end{equation*}
Therefore,
\[
s(x)
=
\min\{ v_{k - 1}(x), v_{k}(x) \}
\quad \text{in} \ D_{k} \setminus \Gamma_{k + 1}.
\]
Meanwhile, since
\[
v_{k - 1}
\le
\frac{1}{2} \theta^{k \beta_{0}}
<
\theta^{k \beta_{0}}
=
v_{k},
\quad \text{on} \ \Gamma_{k} \setminus D_{k},
\]
there is an open neighborhood $O_{k}$ of $\Gamma_{k} \setminus D_{k}$ such that
\[
s = v_{k - 1}
\quad \text{in} \ O_{k}.
\]
By Lemma \ref{lem:glueing}, $- \divergence \mathcal{A}(x, \nabla s) \ge \nu_{k} = \nu$
in an open neighborhood of $\Gamma_{k} \setminus \Gamma_{k + 1}$.
Therefore, $s$ satisfies the desired inequality \eqref{eqn:barrier}.
The pointwise two-sided estimate \eqref{eqn:bound_of_barrier} follows from \eqref{eqn:patched_func_01}.
\end{proof}

Theorem \ref{thm:barrier} is a very strong statement.
In fact, \eqref{eqn:CDC} follows from the following weaker statement.

\begin{theorem}\label{thm:necessity}
Let $\Omega$ be an open set, and let $\Gamma$ be a closed subset of $\partial \Omega$.
Assume that there are constants $C > 0$ and $0 < \beta_{0} \le 1$, such that,
for any $\xi \in \Gamma$, $0 < R < \diam(\Omega)$,
there exists a nonnegative weak supersolution $s \in W^{1, p}_{\loc}(\Omega \cap B(\xi, R))$ to
$- \laplacian_{p} s = 0$ in $\Omega \cap B(\xi, R)$
satisfying
\[
\begin{split}
\frac{1}{C} |x - \xi|^{\beta_{0}}
\le
s(x)
\le
C  |x - \xi|^{\beta_{0}}
\end{split}
\]
for all $x \in \Omega \cap B(\xi, R)$.
Then, there exists a positive constant $\gamma$ depending only on $n$, $p$, $C$ and $\beta_{0}$
such that \eqref{eqn:CDC} holds.
\end{theorem}

\begin{proof}
Set $B = B(\xi, R)$.
Let $h$ be the minimizer of the problem
\[
\inf_{ \substack{ h \in W_{0}^{1, p}(2B) \\ h \ge 1 on \cl{B} \setminus \Omega} }
\int_{\Omega} |\nabla h|^{p} \, dx.
\]
Then, $1 - h$ satisfies $- \laplacian_{p} (1 - h) = 0$ in $\Omega \cap 2B$.
Moreover, this function satisfies the boundary conditions
$1 - h \le 1$ on $\Omega \cap \partial B$ and $1 - h = 0$ on $\partial \Omega \cap B$.
Take a sequence of subdomains $\{ \Omega_{k} \}_{k = 1}^{\infty}$ of $ \Omega$ such that $\bigcup_{k = 1}^{\infty} \Omega_{k} = \Omega$,
and let $u_{k} \in W^{1, p}(\Omega_{k} \cap B)$ be the solutions to the problem
$- \laplacian_{p} u_{k} = 0$ in $\Omega_{k} \cap B$, $u_{k} = 1 - h$ on $\Omega_{k} \cap \partial B$ and $u_{k} = 0$ on $\partial \Omega_{k} \cap B$.
By the comparison principle for weak solutions,
\[
u_{k}(x) \le \frac{C}{R^{\beta_{0}}} s(x)
\]
for a.e. $x \in \Omega_{k} \cap B$, where $s$ is a function in the statement.
Taking the limit $k \to \infty$, we find that
\[
1 - h(x) \le \frac{C}{R^{\beta_{0}}} s(x)
\]
for a.e. $x \in \Omega \cap B$.
Hence, we have $1 - h(x) \le 1 / 2$ for all $x \in \Omega \cap \cl{c B}$, where $c := (2 C)^{-1 / \beta_{0}}$.
By combining this inequality with an explicit formula of $\capacity_{p}(B(x, r), B(x, R))$ (see, \cite[Example 2.12]{MR2305115}), 
we obtain \eqref{eqn:CDC}.
\end{proof}

\section{Existence of globally H\"{o}lder continuous solutions}\label{sec:existence}

The main result of this section is Theorem \ref{thm:main}. It is a direct consequence of Theorem \ref{thm:existence} and known interior regularity estimates.
Theorem \ref{thm:existence} itself is derived from Theorem \ref{thm:barrier} and the comparison principle.

\begin{definition}
Let $C_{c}^{\infty}( \cl{\Omega} \setminus \Gamma )$ be the set of all smooth functions vanishing on a neighborhood of $\Gamma \subset \partial \Omega$.
We define
\[
W^{1, p}_{\complement \Gamma}(\Omega)
:=
\left\{ u \in W^{1, p}_{\loc}(\Omega) \colon u \varphi \in W_{0}^{1, p}(\Omega), \ \forall \varphi \in C_{c}^{\infty}( \cl{\Omega} \setminus \Gamma ) \right\}.
\]
Additionally, for $0 < \beta \le 1$, we introduce a subspace $L^{\infty}_{\Gamma, \beta}(\Omega) \subset L^{\infty}(\Omega)$ by
\[
L^{\infty}_{\Gamma, \beta}(\Omega)
:=
\left\{ u \in L^{\infty}(\Omega) \colon \left\| u / \delta_{\Gamma}^{\beta} \right\|_{L^{\infty}(\Omega)} < \infty \right\}.
\]
\end{definition}

\begin{theorem}\label{thm:existence}
Let $\Omega$ be a bounded open set satisfying \eqref{eqn:CDC}. 
Suppose that $\nu \in \mathsf{M}_{\Gamma}^{\lambda}(\Omega)$ for some $n - p < \lambda \le n$. 
Then, there exists a weak solution
$u \in W^{1, p}_{\complement \Gamma}(\Omega) \cap C(\Omega)$ to \eqref{eqn:DE} satisfying
\begin{equation}\label{eqn:bound_of_u}
\left\| u / \delta_{\Gamma}^{\beta_{0}} \right\|_{L^{\infty}(\Omega)}
\le
C \, (\sup_{x \in \Omega} \delta_{\Gamma}(x) )^{\beta - \beta_{0}} \| \nu \|_{\mathsf{M}_{\Gamma}^{ \lambda }(\Omega)}^{1 / (p - 1)},
\end{equation}
where $C$ and $\beta_{0}$ are constants depending only on $n$, $p$, $L$, $\gamma$ and $\lambda$,
and $\beta = (\lambda - n + p) / (p - 1)$.
Moreover, this solution is unique in $W^{1, p}_{\complement \Gamma}(\Omega) \cap L^{\infty}_{\Gamma, \beta_{0}}(\Omega)$.
\end{theorem}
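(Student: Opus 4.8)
The plan is to realize $u$ as a limit of finite‑energy solutions of homogeneous Dirichlet problems on an exhaustion of $\Omega$, using the barriers of Theorem~\ref{thm:barrier} to produce the a priori bound \eqref{eqn:bound_of_u} uniformly along the approximation and the comparison principle to control the limit. \emph{Barriers.} First, apply Theorem~\ref{thm:barrier} to $\nu_{+}$ to obtain $\overline{u} \in W^{1,p}_{\loc}(\Omega) \cap C(\Omega)$ with $-\divergence \mathcal{A}(x, \nabla \overline{u}) \ge \nu_{+} \ge \nu$ in $\Omega$ and the bound \eqref{eqn:bound_of_barrier} for $\nu_{+}$. Since $\check{\mathcal{A}}(x,z) := -\mathcal{A}(x,-z)$ again satisfies \eqref{eqn:coercive}--\eqref{eqn:monotonicity} with the same $L$, Theorem~\ref{thm:barrier} applied to $\nu_{-}$ and $\check{\mathcal{A}}$ produces $s^{-}$ with $-\divergence \check{\mathcal{A}}(x, \nabla s^{-}) \ge \nu_{-}$, so that $\underline{u} := -s^{-}$ satisfies $-\divergence \mathcal{A}(x, \nabla \underline{u}) \le -\nu_{-} \le \nu$ in $\Omega$ together with the corresponding two‑sided bound. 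In particular $\underline{u} \le 0 \le \overline{u}$ and $\max\{ \overline{u},\, -\underline{u} \} \le C \diam(\Omega)^{\beta - \beta_{0}} \| \nu \|_{\mathsf{M}_{\Gamma}^{q}(\Omega)}^{1/(p-1)}\, \delta_{\Gamma}^{\beta_{0}}$ on $\Omega$.

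\emph{Approximation and a priori bound.} Fix an exhaustion $\Omega_{1} \Subset \Omega_{2} \Subset \cdots$ of $\Omega$ by open sets. For each $\Omega' \Subset \Omega$, covering $\Omega'$ by finitely many balls of radius comparable to $\dist(\Omega', \Gamma)$ and using Definition~\ref{def:floated_morrey} shows $|\nu|(\Omega') < \infty$ and $\nu|_{\Omega'} \in \mathsf{M}^{q}(\Omega') \subset (W_{0}^{1,p}(\Omega'))^{*}$ (Lemma~\ref{lem:HW}); hence, taking $\Omega' = \Omega_{j}$, the Minty--Browder theorem gives a unique $u_{j} \in W_{0}^{1,p}(\Omega_{j})$ with $-\divergence \mathcal{A}(x, \nabla u_{j}) = \nu$ in $\Omega_{j}$. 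Because $\overline{u} \in W^{1,p}(\Omega_{j})$ (as $\Omega_{j} \Subset \Omega$), the truncations $(u_{j} - \overline{u})_{+}$ and $(\underline{u} - u_{j})_{+}$, being dominated by $|u_{j}| \in W_{0}^{1,p}(\Omega_{j})$, lie in $W_{0}^{1,p}(\Omega_{j})$; since $\overline{u}$ is a supersolution and $\underline{u}$ a subsolution for $\nu$ and $\nu \in \mathcal{M}_{0}(\Omega)$, the comparison principle gives $\underline{u} \le u_{j} \le \overline{u}$ in $\Omega_{j}$. Thus $\{ u_{j} \}$ is bounded in $L^{\infty}$, uniformly in $j$, by the right‑hand side of the previous step.

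\emph{Passage to the limit.} Fix $\Omega' \Subset \Omega$. For balls $B$ with $2B \Subset \Omega'$ of radius small relative to $\dist(\Omega', \Gamma)$ one has $\| \nu \|_{\mathsf{M}^{q}(2B)} \le \| \nu \|_{\mathsf{M}_{\Gamma}^{q}(\Omega)}$, so a Caccioppoli estimate and Lemma~\ref{lem:interior_hoelder_esti}, chained over such balls, bound $\{ u_{j} \}$ uniformly in $W^{1,p}(\Omega') \cap C^{\alpha_{0}}(\Omega')$. By Arzel\`a--Ascoli and a diagonal argument, along a subsequence $u_{j} \to u$ locally uniformly and $\nabla u_{j} \wkto \nabla u$ in $L^{p}_{\loc}(\Omega)$ for some $u \in W^{1,p}_{\loc}(\Omega) \cap C(\Omega)$. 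Testing the equations for $u_{i}, u_{j}$ with $(u_{i} - u_{j}) \zeta$, $\zeta \in C_{c}^{\infty}(\Omega)$, and using $u_{i}, u_{j} \to u$ uniformly on $\spt \zeta$ together with $|\nu|(\spt \zeta) < \infty$, a standard argument based on \eqref{eqn:monotonicity} gives $\nabla u_{j} \to \nabla u$ a.e.; then \eqref{eqn:growth} lets one pass to the limit in $\int \mathcal{A}(x, \nabla u_{j}) \cdot \nabla \varphi\, dx = \int \varphi\, d\nu$, $\varphi \in C_{c}^{\infty}(\Omega)$, so $u$ solves \eqref{eqn:DE} weakly in $\Omega$, and $\underline{u} \le u \le \overline{u}$ yields $u \in L^{\infty}_{\Gamma, \beta_{0}}(\Omega)$ together with \eqref{eqn:bound_of_u}. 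For $u \in W^{1,p}_{\complement \Gamma}(\Omega)$: given $\varphi \in C_{\Gamma}^{\infty}(\Omega)$, fix $\psi \in C_{\Gamma}^{\infty}(\Omega)$ with $\psi = 1$ on $\spt \varphi$; testing the equation for $u_{j}$ with $u_{j} \psi^{p} \in W_{0}^{1,p}(\Omega_{j})$ and using $|\nu|(\Omega \cap \spt \psi) < \infty$ (finite, as $\spt \psi$ is separated from $\Gamma$) and the uniform $L^{\infty}$ bound gives $\int |\nabla u_{j}|^{p} \psi^{p}\, dx \le C$ uniformly in $j$; hence the functions $u_{j} \varphi$, extended by zero, are bounded in $W^{1,p}(\R^{n})$ and belong to $W_{0}^{1,p}(\Omega)$, so their weak limit $u \varphi$ belongs to $W_{0}^{1,p}(\Omega)$. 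As $\varphi$ was arbitrary, $u \in W^{1,p}_{\complement \Gamma}(\Omega)$.

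\emph{Uniqueness and the main obstacle.} If $u_{1}, u_{2} \in W^{1,p}_{\complement \Gamma}(\Omega) \cap L^{\infty}_{\Gamma, \beta_{0}}(\Omega)$ both solve \eqref{eqn:DE} and $\epsilon > 0$, then $\{ u_{1} - u_{2} > \epsilon \} \subset \{ \delta_{\Gamma} \ge \rho \}$ for some $\rho > 0$ by the $L^{\infty}_{\Gamma, \beta_{0}}$ bound; choosing $\varphi \in C_{\Gamma}^{\infty}(\Omega)$ with $0 \le \varphi \le 1$ and $\varphi = 1$ on $\{ \delta_{\Gamma} \ge \rho \}$, one checks $w := (u_{1} - u_{2} - \epsilon)_{+} = (u_{1} \varphi - u_{2} \varphi - \epsilon)_{+} \in W_{0}^{1,p}(\Omega)$, which is admissible in the difference of the two equations because it is supported where $\nabla u_{i} \in L^{p}$ (apply the Caccioppoli estimate above to $u_{i} \in W^{1,p}_{\complement \Gamma}(\Omega)$). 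Then $\int (\mathcal{A}(x, \nabla u_{1}) - \mathcal{A}(x, \nabla u_{2})) \cdot \nabla w\, dx = 0$, so \eqref{eqn:monotonicity} forces $\nabla w = 0$, whence $w = 0$; letting $\epsilon \downarrow 0$ and symmetrizing gives $u_{1} = u_{2}$. I expect the main difficulty to lie in the bookkeeping on $\partial \Omega \setminus \Gamma$, where no capacity density condition is assumed: the membership $u \in W^{1,p}_{\complement \Gamma}(\Omega)$, the admissibility of $u_{j} \psi^{p}$ and of $w$, and the $j$‑uniform Caccioppoli bound all rest on $|\nu|$ being finite on every subset of $\Omega$ separated from $\Gamma$ --- precisely what the ``floated'' Morrey condition of Definition~\ref{def:floated_morrey} provides away from $\Gamma$.
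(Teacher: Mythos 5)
Your proposal is correct and follows the same overall architecture as the paper's proof: barriers from Theorem~\ref{thm:barrier}, an approximation by finite-energy solutions that is controlled by the barriers via a comparison principle, weak compactness plus the monotonicity trick \eqref{eqn:monotonicity} for a.e.\ convergence of gradients, and a comparison principle in $W^{1,p}_{\complement \Gamma}(\Omega) \cap L^{\infty}_{\Gamma, \beta_{0}}(\Omega)$ for uniqueness. The two genuine differences are the approximation scheme and the construction of the lower barrier. The paper keeps the domain fixed and truncates the data: it solves $-\divergence \mathcal{A}(x, \nabla u_{k}) = \mathbf{1}_{D_{k}} \nu$ in $W_{0}^{1,p}(\Omega)$ with $D_{k} = \{ \delta_{\Gamma} > 2^{-k} \}$, which is admissible because the truncated measure lies in $\mathsf{M}^{q}(\Omega) \subset (W_{0}^{1,p}(\Omega))^{*}$; you instead keep the data and exhaust the domain, solving on $\Omega_{j} \Subset \Omega$. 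Both work; the paper's choice makes the Dirichlet condition on $\partial\Omega \setminus \Gamma$ automatic ($u_{k} \in W_{0}^{1,p}(\Omega)$ for every $k$), whereas yours needs the extra (correct) step showing $u\varphi \in W_{0}^{1,p}(\Omega)$ as a weak limit of $u_{j}\varphi$. For the two-sided bound, the paper compares $|u_{k}|$ with the solution $v_{k}$ for the data $\mathbf{1}_{D_{k}}|\nu|$ and then $v_{k}$ with a single barrier built from $|\nu|$, while you build an upper barrier from $\nu_{+}$ and a lower one from $\nu_{-}$ via the reflected operator $\check{\mathcal{A}}(x,z) = -\mathcal{A}(x,-z)$; your version is arguably cleaner, since the paper's inequality $-v_{k} \le u_{k}$ implicitly relies on the same reflection. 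Finally, your closing remark correctly isolates the point that must be verified throughout, namely that $|\nu|$ is finite (indeed lies in $\mathsf{M}^{q}$ with controlled norm) on every subset of $\Omega$ at positive distance from $\Gamma$; this is exactly the covering argument carried out at the start of the paper's proof of Theorem~\ref{thm:barrier}.
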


\begin{proof}
Set $D_{k} = \{ x \in \Omega \colon \delta_{\Gamma}(x) > 2^{-k} \}$.
Consider the Dirichlet problems
\[
\begin{cases}
- \divergence \mathcal{A}(x, \nabla v_{k}) = \mathbf{1}_{D_{k}} |\nu| & \text{in} \ \Omega, \\
v_{k} = 0 & \text{on} \ \partial \Omega.
\end{cases}
\]
By assumption, the right-hand side belongs to $\mathcal{M}^{\lambda}(\Omega)$, and thus, it also belongs to $(W_{0}^{1, p}(\Omega))^{*}$.
Therefore, this problem has a weak solution $v_{k} \in W_{0}^{1, p}(\Omega)$.
By Lemma \ref{lem:interior_hoelder_esti}, we have $v_{k} \in W_{0}^{1, p}(\Omega) \cap C(\Omega)$.
Using Theorem \ref{thm:barrier}, we take a nonnegative function $s \in W^{1, p}_{\loc}(\Omega) \cap C(\Omega)$ satisfying
\[
- \divergence \mathcal{A}(x, \nabla v_{k}) \ge |\nu| \quad \text{in} \ \Omega
\]
and \eqref{eqn:bound_of_barrier}.
By the comparison principle in \cite[Theorem 3.5]{MR4493271},
\begin{equation}\label{eqn:comparison_with_v}
0 \le v_{k}(x)  \le s(x)
\end{equation}
for all $x \in \Omega$.
Let $\{ u_{k} \}_{k = 1}^{\infty} \subset W_{0}^{1, p}(\Omega) \cap C(\Omega)$ be weak solutions to
\[
\begin{cases}
- \divergence \mathcal{A}(x, \nabla u_{k}) = \mathbf{1}_{D_{k}} \nu & \text{in} \ \Omega, \\
u_{k} = 0 & \text{on} \ \partial \Omega.
\end{cases}
\]
By Theorem \ref{thm:comparison},
\[
- v_{k}(x) \le u_{k}(x) \le v_{k}(x)
\]
for all $x \in \Omega$; therefore,
\begin{equation}\label{eqn:bound_by_s}
|u_{k}(x)| \le s(x)
\end{equation}
for all $x \in \Omega$.

Next, we consider the limit of $\{ u_{k} \}_{k = 1}^{\infty}$.
Take $\eta \in C_{c}^{\infty}( \cl{\Omega} \setminus \Gamma )$.
By using the test function $u \eta^{p}$, we get
\begin{equation}\label{eqn:bound_of_global_en}
\begin{split}
\int_{\Omega} |\nabla (u_{k} \eta)|^{p} \, dx
\le
C \left( \| \nabla \eta \|_{L^{\infty}(\Omega)}^{p} \int_{ \spt \eta } |u_{k}|^{p} \, dx + \int_{ \spt \eta } |u_{k}|  \, d \nu  \right).
\end{split}
\end{equation}
Fix $j \ge 1$, and consider $\eta = \eta_{j} \in C_{c}^{\infty}(\Omega)$ such that $\eta_{j} \equiv 1$ on $\cl{D_{j}}$.
The right-hand side of \eqref{eqn:bound_of_global_en} is estimated by assumption on $\nu$ and \eqref{eqn:bound_by_s}.
Thus, there exist a constant $C_{j}$ independent of $k$ such that
\[
\| \nabla u_{k} \|_{L^{p}(D_{j})} \le C_{j}.
\]
Meanwhile, by Lemma \ref{lem:interior_hoelder_esti}, we have
\[
\| u_{k} \|_{C^{\alpha_{0}}(D_{j})} \le C_{j}.
\]
Take a subsequence of $\{ u_{k} \}_{k = 1}^{\infty}$, $u \in W^{1, p}_{\loc}(\Omega) \cap C(\Omega)$, and $\xi \in L^{p'}_{\loc}(\Omega)$ such that
\begin{align}
u_{k} \to u \quad & \text{locally uniformly in $\Omega$,}
\label{eqn:weak_conv_u}
\\
\nabla u_{k} \wkto \nabla u \quad & \text{weakly in } L^{p}_{\loc}(\Omega),
\label{eqn:weak_conv_nabla}
\\
\mathcal{A}(x, \nabla u_{k}) \wkto \xi \quad & \text{weakly in } L^{p'}_{\loc}(\Omega).
\label{eqn:weak_conv_flux}
\end{align}
By \eqref{eqn:bound_by_s}, we have $u \in L^{\infty}_{\Gamma, \beta_{0}}(\Omega)$.
Meanwhile, it follows from \eqref{eqn:bound_by_s}, \eqref{eqn:bound_of_global_en} and \eqref{eqn:weak_conv_nabla} that $u \in W^{1, p}_{\complement \Gamma}(\Omega)$.
Thus, $u$ satisfies the desired Dirichlet condition.

It remains to show that $\xi = \mathcal{A}(x, \nabla u)$.
However, it follows from standard arguments for monotone operators (see, \cite[Theorem 3.75]{MR2305115} for instance).
Hence, this completes the proof.
\end{proof}

The uniqueness part of Theorem \ref{thm:existence} follows from the following comparison principle.
It is well-known if $\Gamma = \emptyset$.

\begin{theorem}\label{thm:comparison}
Let $u, v \in W^{1, p}_{\complement \Gamma}(\Omega) \cap L^{\infty}_{\Gamma, \beta}(\Omega)$
be weak sub- and supersolutions to \eqref{eqn:DE}, respectively.
Assume that $u$ and $v$ are $p$-quasicontinuous on $\Omega$.
Then, $u \le v$ q.e. on $\Omega$.
In particular, if $u$ and $v$ are weak solutions to \eqref{eqn:DE}, then $u = v$ q.e. on $\Omega$.
\end{theorem}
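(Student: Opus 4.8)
The plan is to run the standard monotonicity (Leray--Lions) comparison argument, the only novelty being that the boundary decay encoded in $L^{\infty}_{\Gamma, \beta}(\Omega)$ lets us avoid any error term coming from cut-off functions near $\Gamma$. First I would reduce to proving $w := (u - v)_{+} = 0$ a.e.\ in $\Omega$: since $w$ is $p$-quasicontinuous (a maximum of $p$-quasicontinuous functions) and the zero function already has a quasicontinuous representative, $w = 0$ a.e.\ forces $w = 0$ q.e., which is the claim; the uniqueness statement then follows by interchanging $u$ and $v$.

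The key observation is that for each $\epsilon > 0$ the superlevel set $\{u - v > \epsilon\}$ stays at a positive distance from $\Gamma$. Indeed, put $A := \max\{ \|u / \delta_{\Gamma}^{\beta}\|_{L^{\infty}(\Omega)}, \|v / \delta_{\Gamma}^{\beta}\|_{L^{\infty}(\Omega)} \}$; if $u(x) - v(x) > \epsilon$ then $\epsilon < |u(x)| + |v(x)| \le 2 A \delta_{\Gamma}(x)^{\beta}$, hence $\delta_{\Gamma}(x) > d(\epsilon) := (\epsilon / 2 A)^{1 / \beta} > 0$ (here $\beta > 0$ is used). Thus $\phi_{\epsilon} := (u - v - \epsilon)_{+}$ vanishes on the neighbourhood $\{\delta_{\Gamma} < d(\epsilon)\}$ of $\Gamma$. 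Next I would show $\phi_{\epsilon} \in W_{0}^{1, p}(\Omega) \cap L^{\infty}(\Omega)$: pick $\varphi \in C_{\Gamma}^{\infty}(\Omega)$ with $0 \le \varphi \le 1$, $\varphi \equiv 0$ near $\Gamma$ and $\varphi \equiv 1$ on $\{\delta_{\Gamma} \ge d(\epsilon)\}$. Splitting into $\{\varphi = 1\}$ (where both sides equal $(u - v - \epsilon)_{+}$) and $\{\varphi < 1\} \subset \{\delta_{\Gamma} < d(\epsilon)\}$ (where $u - v < 2 A d(\epsilon)^{\beta} = \epsilon$, so both sides vanish) one gets the pointwise identity $\phi_{\epsilon} = (\varphi(u - v) - \epsilon)_{+}$. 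Since $\varphi(u - v) = \varphi u - \varphi v \in W_{0}^{1, p}(\Omega)$ by definition of $W^{1, p}_{\complement \Gamma}(\Omega)$, and $W_{0}^{1, p}(\Omega)$ is stable under the truncation $t \mapsto (t - \epsilon)_{+}$, it follows that $\phi_{\epsilon} \in W_{0}^{1, p}(\Omega) \cap L^{\infty}(\Omega)$.

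Now $\phi_{\epsilon} \ge 0$ is an admissible test function in the weak formulations of both the subsolution $u$ and the supersolution $v$ (the pairing with $\nu$ being legitimate because $\nu \in \mathsf{M}^{q}_{\Gamma}(\Omega) \subset \mathcal{M}_{0}(\Omega)$, cf.\ Lemma \ref{lem:HW}); subtracting the two inequalities, the $d\nu$-terms cancel, resp.\ leave a nonnegative remainder, and using $\nabla \phi_{\epsilon} = \mathbf{1}_{\{u - v > \epsilon\}}(\nabla u - \nabla v)$ a.e.\ we obtain
\[
\int_{\{u - v > \epsilon\}} \bigl( \mathcal{A}(x, \nabla u) - \mathcal{A}(x, \nabla v) \bigr) \cdot (\nabla u - \nabla v) \, dx \le 0 .
\]
By strict monotonicity \eqref{eqn:monotonicity} the integrand is nonnegative and vanishes only where $\nabla u = \nabla v$, so $\nabla u = \nabla v$ a.e.\ on $\{u - v > \epsilon\}$, i.e.\ $\nabla \phi_{\epsilon} = 0$ a.e.\ in $\Omega$. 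Hence $\phi_{\epsilon}$ is a.e.\ constant on each connected component of $\Omega$; restricted to a component it lies in $W_{0}^{1, p}$ of that (bounded) component, and a nonzero constant there would contradict the Poincar\'{e} inequality, so $\phi_{\epsilon} = 0$ a.e.\ in $\Omega$, i.e.\ $u - v \le \epsilon$ a.e. Letting $\epsilon \downarrow 0$ gives $u \le v$ a.e., hence q.e.

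I expect the only genuinely delicate point to be the admissibility of $\phi_{\epsilon}$ as a test function: establishing the identity $\phi_{\epsilon} = (\varphi(u - v) - \epsilon)_{+}$ and concluding from it that $\phi_{\epsilon} \in W_{0}^{1, p}(\Omega)$, i.e.\ that the $L^{\infty}_{\Gamma, \beta}$-decay confines $\{u - v > \epsilon\}$ away from $\Gamma$ while the $W^{1, p}_{\complement \Gamma}$-condition takes care of $\partial\Omega \setminus \Gamma$. Once this is in place the rest is the textbook monotonicity comparison, and it is worth noting that, in contrast with Lemmas \ref{lem:global_boundedness}--\ref{lem:boundary_hoelder_esti}, this argument uses neither the capacity density condition \eqref{eqn:CDC} nor any finiteness of $\nu$.
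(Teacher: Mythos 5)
Your argument is correct and follows the same overall strategy as the paper: the $L^{\infty}_{\Gamma,\beta}$ decay confines $\{u - v > \epsilon\}$ to $\{\delta_{\Gamma} \ge d(\epsilon)\}$, the $W^{1,p}_{\complement\Gamma}$ condition then places $(u-v-\epsilon)_{+}$ in a $W_{0}^{1,p}$ space, and testing with it plus strict monotonicity \eqref{eqn:monotonicity} forces $\nabla u = \nabla v$ on the superlevel set. The two technical steps are executed differently, though. For membership in $W_{0}^{1,p}$, the paper works in the subdomain $D = \{\delta_{\Gamma}^{\beta} > \epsilon/C\}$ and invokes the quasicontinuity characterization of $W_{0}^{1,p}(D)$ (\cite[Theorem 4.5]{MR2305115}, via $(u-v-\epsilon)_{+} = 0$ q.e.\ on $\Omega\setminus D$), whereas you obtain $\phi_{\epsilon} = (\varphi(u-v)-\epsilon)_{+} \in W_{0}^{1,p}(\Omega)$ directly from the definition of $W^{1,p}_{\complement\Gamma}(\Omega)$ and truncation stability; your route is more self-contained and makes the role of the cut-off explicit. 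For the endgame, the paper stays at the level of capacity: it uses $(u-v-\epsilon)_{+}/\epsilon$ as an admissible function for $\capacity_{p}(K,\Omega)$, $K \subset E \cap D$ compact, and derives a contradiction with $\capacity_{p}(E,\Omega) > 0$ via \eqref{eqn:choquet}, thereby reaching the q.e.\ conclusion directly. You instead conclude $\phi_{\epsilon} = 0$ a.e.\ by Poincar\'{e} and then upgrade to q.e.\ using the standard fact that quasicontinuous functions agreeing a.e.\ agree q.e.\ (Kilpel\"{a}inen's uniqueness theorem, \cite[Theorem 4.12]{MR2305115}); this is legitimate but imports an extra (standard) lemma that the paper's capacity argument avoids. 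Both proofs share the same implicit gloss: extending the weak formulation from $C_{c}^{\infty}$ to the test function $\phi_{\epsilon}$ and verifying that $\int \phi_{\epsilon}\,d\nu$ is finite (which holds because $|\nu|$ is finite on $\{\delta_{\Gamma} \ge d(\epsilon)\}$ by a covering argument with the floated Morrey condition). Your closing remark that neither \eqref{eqn:CDC} nor finiteness of $\nu$ enters is accurate.
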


\begin{proof}
If the claim is false, then, the set $\{ x \in \Omega \colon u(x) > v (x) \}$ has positive capacity.
By \eqref{eqn:cap_monotonicity}, the set $E := \{ x \in \Omega \colon (u - v)(x) > 2 \epsilon \}$ has positive capacity for some $\epsilon > 0$.
Since $u, v \in L^{\infty}_{\Gamma, \beta}(\Omega)$, we have
\[
|u - v|(x) \le C \delta_{\Gamma}(x)^{\beta}
\]
for q.e. $x \in \Omega$, where $C := \| (u - v) / \delta_{\Gamma}^{\beta} \|_{L^{\infty}(\Omega)}$.
Consider the open set $D = \{ x \in \Omega \colon \delta_{\Gamma}(x)^{\beta} > \epsilon / C \}$.
By taking $\eta \in C^{\infty}(\R^{n})$ such that $\eta = 1$ on $\cl{D}$ and $\eta = 0$ near $\Gamma$, we find that $u, v \in W^{1, p}(D)$.
Since $(u - v - \epsilon)_{+} = 0$ q.e. on $\Omega \setminus D$,
it follows from \cite[Theorem 4.5]{MR2305115} that $(u - v - \epsilon)_{+} \in W_{0}^{1, p}(D)$.
Using density of $C_{c}^{\infty}(D)$ in $W_{0}^{1, p}(D)$, we get
\[
\int_{\Omega} \left( \mathcal{A}(x, \nabla u) - \mathcal{A}(x, \nabla u) \right) \cdot \nabla (u - v - \epsilon)_{+} \, dx \le 0.
\]
It follows from \eqref{eqn:monotonicity} that $\nabla u = \nabla v$ a.e. in $D$. 
Meanwhile, since $\capacity_{p}(E \setminus D, \Omega) = 0$, we have
\[
0
<
\capacity_{p}(E \cap D, \Omega).
\]
Let $K$ be any compact subset of $E \cap D$.
Since $u - v - \epsilon > \epsilon$ in $E \cap D$, we have
\[
\capacity_{p}(K, \Omega)
\le
\frac{1}{\epsilon^{p}} \int_{\Omega} |\nabla (u - v - \epsilon)_{+}|^{p} \, dx
=
\frac{1}{\epsilon^{p}} \int_{D} |\nabla (u - v - \epsilon)_{+}|^{p} \, dx.
\]
This contradicts to \eqref{eqn:choquet} because the right-hand side is zero.
\end{proof}


\begin{proof}[Proof of Theorem \ref{thm:main}]
Existence of a solution $u \in W^{1, p}_{\loc}(\Omega) \cap C(\Omega)$ to \eqref{eqn:DE} follows from Theorem \ref{thm:existence}.
Since $u(x)$ is controlled by $\delta(x)^{\beta_{0}}$, it also belongs to $C(\cl{\Omega})$.

Let us prove \eqref{eqn:hoelder_esti}. Set $\beta_{1} := \min\{ \beta_{0}, \alpha_{0} \}$,
where $\alpha_{0}$ and $\beta_{0}$ are constants in Lemma \ref{lem:interior_hoelder_esti} and Theorem \ref{thm:barrier}, respectively.
If $|x - y| > \max\{ \delta(x),  \delta(y) \} / 2$, then, \eqref{eqn:bound_of_u} gives
\[
\begin{split}
|u(x) - u(y)|
& \le
|u(x)| + |u(y)|
\\
& \le
\left( C  \| \nu \|_{\mathsf{M}_{\partial \Omega}^{ \lambda }(\Omega)}^{1 / (p - 1)} \right)
\inrad(\Omega)^{\beta - \beta_{0}} \left( \delta(x) + \delta(y) \right)^{\beta_{0}}
\\
& \le
\left( C  \| \nu \|_{\mathsf{M}_{\partial \Omega}^{ \lambda }(\Omega)}^{1 / (p - 1)} \right)
\inrad(\Omega)^{\beta - \beta_{1}} |x - y|^{\beta_{1}}.
\end{split}
\]
Therefore, we may assume that $|x - y| \le \delta(x) / 2$ without loss of generality.
Set $r := \delta(x)$. Since $B(x, 2r) \subset \Omega$,  Lemma \ref{lem:interior_hoelder_esti} yields
\[
\begin{split}
|u(x) - u(y)|
& \le
C \left( \frac{ |x - y| }{r} \right)^{\beta_{1}} \left( \osc_{B(x, r / 2)} u +  \| \nu \|_{\mathsf{M}^{q}(\Omega)}^{1 / (p - 1)} r^{\beta} \right) .
\end{split}
\]
Meanwhile, by \eqref{eqn:bound_of_u}, we have
\[
\begin{split}
|u(z)|
& \le
\left( C  \| \nu \|_{\mathsf{M}_{\partial \Omega}^{ \lambda }(\Omega)}^{1 / (p - 1)} \right)
\inrad(\Omega)^{\beta - \beta_{0}} r^{\beta_{0}}
\\
& \le
\left( C  \| \nu \|_{\mathsf{M}_{\partial \Omega}^{ \lambda }(\Omega)}^{1 / (p - 1)} \right)
\inrad(\Omega)^{\beta - \beta_{1}} r^{\beta_{1}}
\end{split}
\]
for all $z \in B(x, r / 2)$.
Combining the two inequalities, we obtain the desired estimate.
\end{proof}

As mentioned in the introduction, the condition \eqref{eqn:RZ} is necessary.
In particular, we get the following criterion.

\begin{corollary}\label{cor:criterion}
Let $\Omega$ be a bounded open set satisfying \eqref{eqn:CDC} with $\Gamma = \partial \Omega$, and let $\nu \in \mathcal{M}^{+}(\Omega)$.
Then, there exists a globally H\"{o}lder continuous weak solution $u \in W^{1, p}_{\loc}(\Omega) \cap C(\cl{\Omega})$ to \eqref{eqn:DE}
if and only if $\nu \in \mathsf{M}_{\partial \Omega}^{\lambda, +}(\Omega)$ for some $n - p <  \lambda \le n$.
\end{corollary}

\section{Applications to embedding theorems}\label{sec:embedding}

Using Theorem \ref{thm:main} and the Picone inequality (see, \cite{MR1618334}), we prove the following compact embedding result.

\begin{theorem}\label{thm:compact_embedding}
Let $\Omega$ be a bounded open set satisfying \eqref{eqn:CDC_intro}.
Suppose that $\nu \in \mathsf{M}_{\partial \Omega}^{\lambda, +}(\Omega)$ for some $n - p <  \lambda \le n$.
Then, the embedding $i \colon W_{0}^{1, p}(\Omega) \hookrightarrow L^{p}(\Omega; \nu)$ holds and is compact.
\end{theorem}

We first prove the boundedness.

\begin{theorem}\label{thm:embeding}
Let $\Omega$ be a bounded open set satisfying \eqref{eqn:CDC}.
Suppose that $\nu \in \mathsf{M}_{\Gamma}^{\lambda, +}(\Omega)$ for some $n - p <  \lambda \le n$.
Then, the embedding $i \colon W_{0}^{1, p}(\Omega) \to L^{p}(\Omega; \nu)$ holds.
Moreover, we have
\[
\sup_{ \| \nu \|_{\mathsf{M}_{\Gamma}^{ \lambda }(\Omega)} \le 1 }
\| \varphi \|_{L^{p}(\Omega; \nu)}^{p}
\le
C \, ( \sup_{x \in \Omega} \delta_{\Gamma}(x) )^{\lambda - n + p} \| \nabla \varphi \|_{L^{p}(\Omega)}^{p}
\]
for all $\varphi \in C_{c}^{\infty}(\Omega)$, where $C$ is a constant depending only on $n$, $p$, $\lambda$ and $\gamma$.
\end{theorem}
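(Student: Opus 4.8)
The plan is to produce a positive, globally controlled barrier for $\nu$ by invoking Theorem~\ref{thm:barrier}, and to feed it into the Picone inequality. Since the embedding $W_0^{1,p}(\Omega)\hookrightarrow L^p(\Omega;\nu)$ makes no reference to $\mathcal{A}$, I would apply Theorem~\ref{thm:barrier} to the $p$-Laplacian itself, i.e.\ to $\mathcal{A}(x,z)=|z|^{p-2}z$, which obeys \eqref{eqn:coercive}--\eqref{eqn:monotonicity} with $L=1$; this is exactly what forces the final constant to depend only on $n,p,q,\gamma$. Assuming (as we may) that $\nu\not\equiv 0$, Theorem~\ref{thm:barrier} furnishes $s\in W^{1,p}_{\loc}(\Omega)\cap C(\Omega)$ with
\[
-\divergence\!\left(|\nabla s|^{p-2}\nabla s\right)\ge\nu\quad\text{in }\Omega,
\]
which is strictly positive on $\Omega$ by the lower bound in \eqref{eqn:bound_of_barrier}, and which, since $\delta_{\Gamma}(x)\le\diam(\Omega)$ for every $x\in\Omega$, satisfies $\sup_{\Omega}s\le C\diam(\Omega)^{\beta}\|\nu\|_{\mathsf{M}_{\Gamma}^{q}(\Omega)}^{1/(p-1)}$ with $C=C(n,p,q,\gamma)$ by the upper bound in \eqref{eqn:bound_of_barrier}.

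Next I would fix $\varphi\in C_c^{\infty}(\Omega)$ and, replacing $\varphi$ by $|\varphi|$, assume $\varphi\ge 0$. On the compact set $K:=\spt\varphi\Subset\Omega$ the continuous function $s$ is bounded below by a positive constant, so $w:=\varphi^{p}\,s^{-(p-1)}$ is a nonnegative element of $W_0^{1,p}(\Omega)$ supported in $K$ (here $s\in W^{1,p}_{\loc}(\Omega)$ is used to get $\nabla w\in L^{p}$). Inserting $w$ as a test function in the supersolution inequality for $s$ --- legitimate because $w$ can be approximated in $W^{1,p}$ by nonnegative functions in $C_c^{\infty}(\Omega)$ with supports in a fixed compact subset of $\Omega$, the measure term converging because the restriction of $\nu$ to a neighbourhood of $K$ lies in a space $\mathsf{M}^{q}$ and hence, by Lemma~\ref{lem:HW}, in the corresponding dual $(W_0^{1,p})^{*}$, and the flux term converging since $|\nabla s|^{p-2}\nabla s\in L^{p'}_{\loc}(\Omega)$ --- gives
\[
\int_{\Omega}\frac{\varphi^{p}}{s^{p-1}}\,d\nu\le\int_{\Omega}|\nabla s|^{p-2}\nabla s\cdot\nabla\!\left(\frac{\varphi^{p}}{s^{p-1}}\right)dx.
\]
The pointwise Picone (Allegretto--Huang) inequality $|\nabla s|^{p-2}\nabla s\cdot\nabla(\varphi^{p}s^{-(p-1)})\le|\nabla\varphi|^{p}$, valid a.e.\ on $\{s>0\}$, then yields $\int_{\Omega}\varphi^{p}s^{-(p-1)}\,d\nu\le\int_{\Omega}|\nabla\varphi|^{p}\,dx$.

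It remains to discard the weight: bounding $s^{p-1}\le(\sup_{\Omega}s)^{p-1}\le C^{p-1}\diam(\Omega)^{\beta(p-1)}\|\nu\|_{\mathsf{M}_{\Gamma}^{q}(\Omega)}$ and using $\beta(p-1)=p-n/q$ from \eqref{eqn:beta}, I obtain
\[
\int_{\Omega}\varphi^{p}\,d\nu\le C^{p-1}\diam(\Omega)^{\,p-n/q}\,\|\nu\|_{\mathsf{M}_{\Gamma}^{q}(\Omega)}\int_{\Omega}|\nabla\varphi|^{p}\,dx,
\]
and taking $p$-th roots (note $(p-n/q)/p=1-n/(pq)$) gives the asserted inequality for $\varphi\in C_c^{\infty}(\Omega)$, hence the displayed supremum bound when $\|\nu\|_{\mathsf{M}_{\Gamma}^{q}(\Omega)}\le 1$. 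Finally, boundedness of $i\colon W_0^{1,p}(\Omega)\to L^{p}(\Omega;\nu)$ follows by density of $C_c^{\infty}(\Omega)$ in $W_0^{1,p}(\Omega)$: a $W_0^{1,p}$-Cauchy sequence of test functions is $L^{p}(\Omega;\nu)$-Cauchy by the estimate just proved, and since $\nu\in\mathsf{M}_{\Gamma}^{q,+}(\Omega)\subset\mathcal{M}_0^{+}(\Omega)$ one identifies its limit with the quasicontinuous representative of the $W_0^{1,p}$-limit along a subsequence converging q.e. I expect the main obstacle to be the rigorous justification that $w=\varphi^{p}s^{-(p-1)}$ is an admissible test function against the measure $\nu$ in the supersolution inequality (the approximation argument and integrability bookkeeping sketched above); the remainder is the algebraic Picone identity together with routine tracking of the powers of $\diam(\Omega)$.
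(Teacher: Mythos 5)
Your proposal is correct and follows essentially the same route as the paper: a globally bounded nonnegative $p$-superharmonic function dominating $\nu$, combined with the Picone inequality and the bound $\sup s\le C\diam(\Omega)^{\beta}\|\nu\|_{\mathsf{M}_{\Gamma}^{q}(\Omega)}^{1/(p-1)}$, with the exponent bookkeeping $\beta(p-1)=p-n/q$. The only (immaterial) difference is that you test against the barrier from Theorem~\ref{thm:barrier} directly, whereas the paper uses the exact solution furnished by Theorem~\ref{thm:existence}; both satisfy the same $L^{\infty}$ bound, so the constants come out the same.
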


\begin{proof}
Using Theorem \ref{thm:existence}, we take a nonnegative bounded weak solution $u$ to $- \laplacian_{p} u = \nu$ in $\Omega$.
By \cite[Theorem 1]{MR1618334}, for differentiable functions $u$ and $\varphi$, we have
\begin{equation}\label{eqn:picone}
|\nabla u(x)|^{p - 2} \nabla u(x) \cdot \nabla (u(x)^{1 - p} \varphi(x)^{p}) \le |\nabla \varphi(x)|^{p}.
\end{equation}
Therefore,
\[
\int_{\Omega} \varphi^{p} d \nu
\le
\| u \|_{L^{\infty}(\Omega)}^{p - 1}
\int_{\Omega} \varphi^{p} u^{1-p} d \nu
\le
\| u \|_{L^{\infty}(\Omega)}^{p - 1}
\int_{\Omega} |\nabla \varphi|^{p} \, dx
\]
for any nonnegative $\varphi \in C_{c}^{\infty}(\Omega)$.
The right-hand side is estimated by \eqref{eqn:bound_of_u}.
\end{proof}

\begin{proof}[Proof of Theorem \ref{thm:compact_embedding}]
By Theorem \ref{thm:embeding}, $i$ is bounded. Hence, we consider compactness of it.

Define a sequence of operators $\{ i_{k} \}_{k = 1}^{\infty}$ by
\[
i_{k} \colon W_{0}^{1, p}(\Omega) \ni \varphi \mapsto \varphi \mathbf{1}_{ \Omega_{k} } \in L^{p}(\Omega; \nu),
\]
where $\Omega_{k} := \{ x \in \Omega \colon \delta(x) > 2^{-k} \}$.
Fix $k \ge 1$.
By assumption on $\nu$, we have
\[
\lim_{R \to 0} \sup_{ \substack{x \in \R^{n} \\ 0 < r < R} } r^{1 - n / p} \nu(\Omega_{k} \cap B(x, r))^{1 / p}
\le
\lim_{R \to 0} C R^{\beta (p - 1) / p}
=
0.
\]
Therefore, by \cite[Sect. 11.9.1, Theorem 3]{MR2777530}, $i_{k}$ is compact.

Using Theorem \ref{thm:main}, we take $u_{k} \in W^{1, p}_{\loc}(\Omega) \cap C(\cl{\Omega})$ such that
\[
\begin{cases}
- \laplacian_{p} u_{k} = \mathbf{1}_{ \Omega \setminus \Omega_{k} } \nu & \text{in} \ \Omega,
\\
u_{k} = 0 & \text{on} \ \partial \Omega.
\end{cases}
\]
By Theorem \ref{thm:comparison}, $\{ u_{k} \}_{k = 1}^{\infty}$ is nonincreasing with respect to $k$.
Consider $u = \lim_{k \to \infty} u$.
It follows from \cite[Theorem 3.78]{MR2305115} that $- \laplacian_{p} u = 0$ in $\Omega$.
Also, $u \in W^{1, p}_{\loc}(\Omega) \cap L^{\infty}_{\partial \Omega, \beta_{0}}(\Omega)$ clearly.
Therefore, $u = 0$ by Theorem \ref{thm:comparison}.
By Dini's theorem, $\{ u_{k} \}_{k = 1}^{\infty}$ converges to zero uniformly.
Meanwhile, by \eqref{eqn:picone}, we have
\[
\begin{split}
\| (i - i_{k}) \varphi \|_{L^{p}(\Omega, d \nu)}^{p}
=
\int_{\Omega} |\varphi|^{p} \mathbf{1}_{ \Omega \setminus \Omega_{k} } \, d \nu
\le
\| u_{k} \|_{L^{\infty}(\Omega)}^{p - 1}
\int_{\Omega} |\nabla \varphi|^{p} \, dx
\end{split}
\]
for any nonnegative $\varphi \in C_{c}^{\infty}(\Omega)$.
Since each $i_{k}$ is compact, $i$ is also compact.
\end{proof}

\section{Examples}\label{sec:examples}

In this section, we discuss examples of $\nu \in \mathsf{M}_{\partial \Omega}^{\lambda}(\Omega)$.

The following claim holds without any geometric assumption on $\Omega$.

\begin{proposition}\label{prop:dist}
Let $\Omega$ be an open set in $\R^{n}$.
Let $\mu \in \mathcal{M}^{\lambda}(\Omega)$ ($0 \le \lambda \le n$), and let $0 \le t \le \lambda$.
Then, we have
$\nu := \delta_{\Gamma}(x)^{- t} \, \mu \in \mathsf{M}_{\Gamma}^{ \lambda - t }(\Omega)$.
\end{proposition}

\begin{proof}
For any $x \in \Omega$ and $0 < r \le \delta_{\Gamma}(x) / 2$, we have
\begin{equation*}\label{eqn:negative_integrability}
\begin{split}
\int_{B(x, r)} \delta_{\Gamma}(x)^{- t} \, d |\mu|
& \le
\int_{B(x, r)} \left( \inf_{y \in B(x, r)} \delta_{\Gamma}(y) \right)^{-t} \, d |\mu|
\\
& \le
\| \mu \|_{ \mathsf{M}^{\lambda}(\Omega) } r^{\lambda - t}.
\end{split}
\end{equation*}
Thus, the claimed inclusion holds.
\end{proof}

\begin{example}\label{example:dist}
Let $m$ be the Lebesgue measure.
Then, $\nu := \delta^{-t} m \in \mathsf{M}_{\partial \Omega}^{n - t}(\Omega)$ for $0 \le t \le n$.
\end{example}

\begin{remark}\label{rem:inclusion}
Generally, $\nu$ in Example \ref{example:dist} is not finite on $\Omega$.
If $\Omega$ is a bounded Lipschitz domain,
then we can check that $\int_{\Omega} \delta^{-t} \, dx$ is finite if and only if $t < 1$ using graph representation of $\partial \Omega$.
Therefore, for $1 \le t < n$, we have
\[
\nu
\in
\mathsf{M}_{\partial \Omega}^{n - t}(\Omega) \setminus \mathcal{M}^{0}(\Omega)
\subset
\mathsf{M}_{\partial \Omega}^{n - t}(\Omega) \setminus \mathcal{M}^{n - t}(\Omega).
\]
By the same method, we can make examples showing that the inclusions in \eqref{eqn:inclusion} are strictly.

Further results of integrability of distance functions can be found in \cite{MR2125540,MR4306765} and the references therein.
They are depending on interior properties of $\Omega$.
\end{remark}

Applying Theorem \ref{thm:main} and Proposition \ref{prop:dist} to Example \ref{example:finite1}, we get the following.

\begin{corollary}\label{cor:existence_for_functions}
Suppose that $f \delta^{t} \in L^{q}(\Omega)$ for some $q \in (n / p, \infty]$ and $0 \le t < p - n / q$.
Then, $\nu = f m \in \mathsf{M}^{n - n / q  - t}(\Omega)$.
Assume further that \eqref{eqn:CDC} holds.
Then, there exists a weak solution $u$ to \eqref{eqn:DE} satisfying \eqref{eqn:hoelder_esti}.
\end{corollary}

Corollary \ref{cor:existence_for_functions} is a refinement of \cite[Theorem 6.3]{hara2022strong}.
It correspond to the results for ordinary differential equations under the assumption of smoothness or symmetry.
This can be considered that one-dimensional results hold if $\nu$ is leveled in the sense of $L^{\infty}(\Omega; m)$ functions.

On the other hand, if $\nu$ concentrates on a low-dimensional set, the range of $t$ is reduced.

\begin{corollary}\label{cor:surface}
Let $\sigma$ be a measure in Example \ref{example:finite3}.
Let $0 \le t < p - 1$, and assume that $|f(x)| \le C \delta(x)^{-t}$ for all $x \in \Omega$.
Then, $\nu := f \sigma \in \mathsf{M}^{n - 1 - t}(\Omega)$.
Assume further that \eqref{eqn:CDC} holds.
Then, there exists a weak solution $u$ to \eqref{eqn:DE} satisfying \eqref{eqn:hoelder_esti}.
\end{corollary}

\begin{remark}
In general, we cannot expect Corollary \ref{cor:surface} to hold for $t$ close to $p$.
Let us consider the case $\Omega$ is a polygon and $\mathcal{A}(x, z) = z$ ($p = 2$).
Then, the solution to \eqref{eqn:DE} is given by the Green potantial
\begin{equation}\label{eqn:green_potential}
u(x) = \int_{\Omega} G_{\Omega}(x, y) \, d \nu(y),
\end{equation}
where $G_{\Omega}(\cdot, \cdot)$ is the Green function of $\Omega$.
In this case, we can estimate the boundary behavior of $G(x, y)$ by using a Schwarz-Christoffel mapping and the boundary Harnack principle,
and it is not comparable to $\delta(y)$ near the corners.
More precisely, if the support of $\nu$ intersects a concave corner of $\partial \Omega$,
then \eqref{eqn:green_potential} is identically infinite for $t$ close to $2$.
Note that boundary Lipschitz estimate for the Green function does not follow from \eqref{eqn:CDC}
and that Lemma \ref{lem:boundary_hoelder_esti} only provides a small H\"{o}lder exponent.
\end{remark}

\begin{remark}
Even when $p=2$ in Corollary \ref{cor:surface}, $\nu$ is not a finite measure in general.
Counterexamples can be made with domains that have an outward cusp.
For example, fix $s \ge 1$ and consider
\[
\Omega = \{ (x, y) \in \R^{2} \colon 0 < x < 1,  |y| < x^{s} \}.
\]
Then,  $d \nu = \delta^{-t} d \mathcal{H}^{1} \lfloor _{ \{ 0 < x < 1, y = 0 \} }$ satisfies assumptions in Corollary \ref{cor:surface}.
However, $\nu$ is not finite if $s t > 1$.
\end{remark}

\section*{Acknowledgments}
This work was supported by JST CREST (doi:10.13039/501100003382) Grant Number JPMJCR18K3
and JSPS KAKENHI (doi:10.13039/501100001691) Grant Number 23H03798.
The author appreciates the helpful comments received during presentations at Meijo University and Nagoya University.
I would like to thank the referees and editors from other journals for their valuable time spent reviewing me. 
This version of the manuscript has been proofread by ChatGPT to improve the language quality.

\bibliographystyle{abbrv} 
\bibliography{reference}


\end{document}